\newif\ifjournal
	\renewcommand*{\newunitpunct}{\addcomma\space}
	\title{Positivity preservation of implicit discretizations of the advection equation}
	\author{Yiannis Hadjimichael\thanks{{\texttt{hadjimichael@wias-berlin.de}}, MTA-ELTE Numerical
	Analysis and Large Networks Research Group, E\"otv\"os Lor\'and University, P\'azm\'any P\'eter
	s\'et\'any 1/C, H-1117, Budapest, Hungary, and Weierstrass Institute (WIAS), Mohrenstra{\ss}e 39,
	10117, Berlin, Germany.}
	\and David I. Ketcheson\thanks{{\texttt{david.ketcheson@kaust.edu.sa}}, King Abdullah University of
	Science and Technology (KAUST), Computer Electrical and Mathematical Science and Engineering Division
	(CEMSE), Thuwal, 23955-6900, Saudi Arabia.}
	\and Lajos L\'oczi\thanks{{\texttt{LLoczi@inf.elte.hu}}, Department of Numerical Analysis, E\"otv\"os
	Lor\'and University, and Department of Differential Equations, Budapest University of Technology and
	Economics, Hungary.}}
	\author[Y. Hadjimichael, D. I. Ketcheson, L. L\'oczi]{%
	Yiannis Hadjimichael\footnote{Weierstrass Institute, \\ Mohrenstr. 39, \\ 10117 Berlin, \\ Germany \\
	E-Mail: yiannis.hadjimichael@wias-berlin.de},
	David I. Ketcheson\footnote{ Computer, Electrical and Mathematical Science \\ and Engineering Division
	(CEMSE), \\ King Abdullah University of Science \\ and Technology (KAUST), \\ Thuwal  23955-6900, \\
	Saudi Arabia \\
	E-Mail: david.ketcheson@kaust.edu.sa},
	Lajos L\'oczi\footnote{Department of Numerical Analysis \\ E\"otv\"os Lor\'and University, and \\
	Department of Differential Equations \\
	Budapest University of Technology and Economics \\ Hungary \\
	E-Mail: LLoczi@inf.elte.hu}
	}
	\title[Positivity preservation of implicit discretizations of the advection equation]
	{Positivity preservation of implicit discretizations of the advection equation}
	\subjclass[2020]{65M12, 65L07, 65L06, 35R09, 92D30} 
	\keywords{Epidemic models, SIR model, integro-differential equations, strong stability preservation}
	\thanks{The Application-domain Specific Highly Reliable IT Solutions project  has been implemented
	 with the support provided from the National Research, Development and Innovation Fund of Hungary,
	 financed under the Thematic Excellence Programme TKP2020-NKA-06 (National Challenges Subprogramme)
	 funding scheme.
	 This work was supported by the King Abdullah University of Science and Technology (KAUST), 4700
	 Thuwal, 23955-6900, Saudi Arabia, and by the Leibniz Competition}
	\def\@textbottom{\vskip \z@ \@plus 32pt}
	\let\@texttop\relax
\newtheorem{theorem}{Theorem}
\newtheorem{lemma}{Lemma}
\newtheorem{remark}{Remark}
\newtheorem{example}{Example}
\newtheorem{corollary}{Corollary}
\newtheorem{proposition}{Proposition}
\newcommand{\dt}{\Delta t}
\newcommand{\dx}{\Delta x}
\newcommand{\te}{\theta}
\newcommand{\nul}{\nu_L(k,\theta)}
\newcommand{\nur}{\nu_R(k,\theta)}
\newcommand{\yl}{y_L(k,\theta)}
\newcommand{\yr}{y_R(k)}
\newcommand{\nplus}{\mathbb{N}^+}
\newcommand{\rr}{\mathbb{R}}
\newcommand{\Por}{P_{R,k}(y)}
\newcommand{\Pol}{P_{L,k,\te}(y)}
\newcommand{\cP}{{\cal{P}}}
\newcommand{\cD}{{\cal{D}}}
\newcommand{\cF}{{\cal{F}}}
\newcommand{\Mod}[1]{\ (\mathrm{mod}\ #1)}
\newcommand\blfootnote[1]{%
  \begingroup
  \renewcommand\thefootnote{}\footnote{#1}%
  \addtocounter{footnote}{-1}%
  \endgroup
}
\begin{document}

\maketitle

\begin{abstract}
We analyze, from the viewpoint of positivity
preservation, certain discretizations of a fundamental partial differential
equation, the one-dimensional advection equation with periodic boundary
condition. The full discretization is obtained by coupling a finite difference spatial semi-discretization
(the second- and some higher-order centered difference schemes, or the Fourier spectral collocation method)
with an arbitrary $\te$-method in time
(including the forward and backward Euler methods, and a second-order method by
choosing $\te\in [0,1]$ suitably). 
The full discretization generates a
two-parameter family of circulant matrices $M\in\mathbb{R}^{m\times m}$, 
where each matrix entry is a
rational function in $\te$ and $\nu$. Here, $\nu$ denotes the CFL number,
being proportional to the ratio between the temporal and spatial discretization
step sizes. The entrywise non-negativity of the matrix $M$---which is
equivalent to the positivity preservation of the fully discrete
scheme---is investigated via discrete Fourier analysis and also by solving some low-order parametric 
linear recursions. We find that positivity preservation of the fully discrete system is impossible if the number  of spatial grid points $m$ is even. However, it turns out that positivity preservation of the fully discrete system is recovered for \emph{odd} values of $m$
provided that $\te\ge 1/2$ and $\nu$ are chosen suitably. 
These results are interesting since the systems of ordinary differential equations obtained via the spatial semi-discretizations studied are \emph{not} positivity preserving. 
\end{abstract}
\blfootnote{The Application-domain Specific Highly Reliable IT Solutions project  has been implemented with the support provided from the National Research, Development and Innovation Fund of Hungary, financed under the Thematic Excellence Programme TKP2020-NKA-06 (National Challenges Subprogramme) funding scheme.
This work was supported by the King Abdullah University of Science and Technology (KAUST), 4700
Thuwal, 23955-6900, Saudi Arabia, and by the Leibniz Competition.}

\section{Background and motivation}
In this work, we investigate the positivity of some discretizations of the advection equation
with periodic boundary condition
\begin{align}\label{advection}
\begin{split}
	U_t (x,t) &= a U_x(x,t), \qquad x\in[0,1], t>0, \\
	U(x,0) &= U_0(x), \\
	U(0,t) &= U(1,t),
\end{split}
\end{align}
where $U:\mathbb{R}\times [0,+\infty)\to\mathbb{R}$ is the unknown function, $U_0:\mathbb{R}\to\mathbb{R}$ is a given differentiable initial function, and $a>0$ is a constant. 
The exact solution of the Cauchy problem \eqref{advection}, given by $U(x,t) =
U_0(\{x+a t\})$ (where $\{\cdot\}$ denotes the fractional part), is positivity preserving; i.e.
\begin{align} \label{implies-positivity}
\forall x\in[0,1], \forall t>0\quad\quad U_0(x) \ge 0 \implies U(x,t) \ge 0.
\end{align}
Positivity is often important in this context, since $U$ may represent a
concentration or density that cannot be negative.

\begin{remark}
Herein the term {\emph{positivity}} is always meant in the weak sense;
i.e.~it means {\emph{non-negativity}}. 
\end{remark}

Finite difference spatial semi-discretization of \eqref{advection} on a uniform grid $\{\dx,2\dx,\ldots, m\dx\}\subset [0,1]$ 
with mesh spacing $\dx>0$ and $m\dx=1$ yields a system of ordinary differential equations
\begin{align} \label{semi-discrete}
    u'(t) & = \frac{a}{\dx}Lu(t),
\end{align}
where $u:\mathbb{R}\to\mathbb{R}^m$, $L\in\mathbb{R}^{m\times m}$ is a circulant matrix  \cite[Section 5.16]{matmat}, and $m \in\mathbb{N}^+$ is the number of grid points (the points $x=0$ and $x=1$ are identified due to the periodic boundary condition).
If one uses an upwind spatial discretization

\begin{equation}\label{upwind1}
L=\left(
\begin{array}{cccccc}
 -1 & 1 & 0  & \cdots & 0 & 0 \\
 0 & -1 & 1 &  \cdots & 0 & 0 \\
 0 & 0 & -1 &  \ddots & 0 & 0\\
 \vdots  & \vdots  & \ddots  & \ddots & \ddots & \vdots \\
  0 & 0 & \cdots  & 0 & -1 & 1 \\
 1 & 0 & \cdots & 0 & 0 & -1 \\
\end{array}
\right),
\end{equation}
then the exact solution of \eqref{semi-discrete} is also positivity
preserving. Moreover, a corresponding full discretization will be positivity preserving too,
under an appropriate time step size restriction $0<\dt\le\dt_0$
if, for example, the forward (explicit) Euler method or any strong stability preserving
method \cite{SSPbook} is used in time; see, e.g.~\cite{posconv}.  

Positivity-preserving methods for transport equations are typically based
on low-order upwind-biased spatial discretizations like that above, or involve
nonlinear limiters (or both).
Here we instead consider the positivity of linear higher-order centered discretizations.
A second-order scheme is obtained with the centered difference discretization
\begin{equation}\label{centered-difference}
L=\left(
\begin{array}{cccccc}
 0 & \frac{1}{2} & 0  & \cdots & 0 & -\frac{1}{2} \\
 -\frac{1}{2} & 0 & \frac{1}{2} &  \cdots & 0 & 0 \\
 0 & -\frac{1}{2} & 0 &  \ddots & 0 & 0\\
 \vdots  & \vdots  & \ddots  & \ddots & \ddots & \vdots \\
  0 & 0 & \cdots  & -\frac{1}{2} & 0 & \frac{1}{2} \\
 \frac{1}{2} & 0 & \cdots & 0 & -\frac{1}{2} & 0 \\
\end{array}
\right).
\end{equation}
However, this spatial semi-discretization is not positivity preserving, since the
matrix $L$ has at least one negative off-diagonal entry \cite[Chapter I,
Theorem 7.2]{hundsdorferverwer}.
This implies that any consistent full discretization based on \eqref{centered-difference}
must fail to preserve positivity under sufficiently small step sizes $\dt>0$.
Indeed, a full discretization based on the scheme \eqref{centered-difference}
and forward Euler in time is not positivity preserving for any step size $\dt>0$.

On the other hand, interestingly, using \eqref{centered-difference} with backward (implicit) 
Euler time integration, one 
observes positivity preservation
under \emph{large} enough time step sizes provided that the parity of the number of spatial grid points is \emph{odd}.  To investigate the differences between the behavior
of the forward and backward Euler methods, we will study the $\theta$-method 
\cite[Chapter IV.3]{hairerwanner} as time discretization applied to \eqref{semi-discrete}
\begin{align}\label{firsttheta}
    u^{n+1} = u^n + \frac{a\dt}{\dx}((1-\theta)Lu^n + \theta Lu^{n+1}).
\end{align}
For $\te\in[0,1]$, the $\theta$-family includes both Euler methods as limiting cases: the forward Euler method for $\te=0$, the backward 
 Euler method for $\te=1$, and the only second-order $\te$-method for $\te=1/2$; 
 any $\theta$-method with $\theta\in(0,1]$ is implicit.
 \begin{remark}
As is customary in the context of space-time discretizations of partial differential equations, superscripts of $u$ in \eqref{firsttheta} (and later in this work) are not exponents but denote time discretization steps.
\end{remark}
\begin{remark}
Similarly to \eqref{implies-positivity}, the semi-discrete system \eqref{semi-discrete} and the fully discrete system \eqref{firsttheta} are said to be \emph{positivity preserving}, if for \textbf{any} componentwise non-negative vector of initial condition 
\begin{itemize}
\item $u(0)$, the solution $u(t)$ of \eqref{semi-discrete} stays componentwise non-negative $\forall t>0$
\item $u^0$, the solution $u^n$ of \eqref{firsttheta} stays componentwise non-negative $\forall n\in\mathbb{N}^+$,
\end{itemize}
respectively.
\end{remark}

The motivation for this work is not to develop new positivity preserving
methods, but to study the positivity of some of fundamental discretizations
such as the second-order centered difference method \eqref{centered-difference} and the
$\theta$-method \eqref{firsttheta}.  As we will see, the combination of
these methods does not preserve positivity in general, nor in the limit
of small time step size, so it is not typically recommended in practice.
Nevertheless, this study may both shed light on the
behavior of more complicated methods used in practice and provide tools
that can be used to study the positivity of those methods.
In the later sections of the paper, we combine higher-order methods in space with the $\theta$-method in time, as a next step in this direction.


\subsection{Structure of the paper and notation}
In Section~\ref{sectiondiscFourier}, we first characterize
positivity preservation of full discretizations of \eqref{advection} resulting from 
finite difference spatial and one-step time discretizations. Then, in Section~\ref{sectioncentered}, we study positivity of the second-order centered differences in space
combined with the $\theta$-method in
time, using discrete Fourier
analysis. We also point out some connections with structured non-negative inverse eigenvalue problems. In Section~\ref{section3}, we study this particular full discretization in more detail.
In Section \ref{explsect}, by setting up and solving certain parametric linear recursions, we derive explicit, non-trigonometric formulae for the entries of the full discretization matrix $M$. Then, in Section \ref{nonnegsect}, we use these formulae to provide precise results on the non-negativity of $M$ in terms of roots of some  sparse polynomials.
In Section \ref{otherdisc}, we discuss some observations and results regarding
higher-order spatial discretizations, including high-order centered differences
(in Section~\ref{highercentered}) and spectral collocation methods (in Section~\ref{spectrcoll}).
We summarize our findings in Section \ref{conclusions}.\\

Throughout the paper, the set of positive integers is denoted by $\nplus$, the complex imaginary unit is $\imath$, the identity matrix is $I\in\rr^{m\times m}$, and to emphasize the dimensions of a matrix, we will sometimes write, for example, $L_{m\times m}$.
The symbol $M\ge 0$ means that  $M_{i,j}\ge 0$ for every entry $1\le i, j\le m$ of the matrix $M\in\mathbb{R}^{m\times m}$. The positive integer $m$ is the number of spatial grid points within the interval $[0,1]$, and the matrices $L$ and $M$ are the matrices corresponding to the spatial and the full discretizations, respectively. The three key parameters in our investigations will be $m$, $\theta\in [0,1]$ and $\nu>0$ (see \eqref{firsttheta} and \eqref{nudef}).

The computations in this work have been carried out by using Wolfram \textit{Mathematica} version 11.





\section{Discrete Fourier analysis}\label{sectiondiscFourier}
From here on, we consider the problem \eqref{advection} on the domain $x\in[0,1]$
with periodic boundary condition $U(0,t)=U(1,t)$.  Finite difference discretization
in space with step size $\dx$ leads to \eqref{semi-discrete} with $u_j\approx u(j\dx)$ for $1\le j\le m$. 
The circulant matrix $L\in\mathbb{R}^{m\times m}$ has the eigendecomposition
\begin{align}\label{eigen}
    L  = \cF \Lambda \cF^*,
\end{align}
where the (unitary) matrix of normalized eigenvectors $\cF$ has entries
\begin{align}\label{eigenvectrors}
    f_{j,\ell}  \coloneqq \frac{1}{\sqrt{m}}\exp(\imath  (j-1) \xi_\ell)  \quad\quad (1 \le j, \ell \le m),
\end{align}
and $\Lambda$ is the diagonal matrix of eigenvalues $\lambda_\ell$, which depends on the
particular finite difference method chosen.  Here $\xi_\ell$ are evenly spaced angles 
\begin{equation}\label{xildef}
    \xi_\ell  \coloneqq \frac{2\pi(\ell-1)}{m} \quad\quad (1 \le \ell \le m),
\end{equation}
such that $\exp(\imath\xi_\ell)$ are the $m^\text{th}$ roots of unity.
Applying a one-step time
discretization with step size $\dt$ and stability function $R:\mathbb{C}\to\mathbb{C}$ to \eqref{semi-discrete} leads to
the iteration
\begin{align}\label{M}
    u^{n+1} & = M u^n,
\end{align}
where 
\begin{equation}\label{Mdualdef}
M:=R(\nu L)=\cF R(\nu \Lambda) \cF^*, 
\end{equation}
and 
\begin{equation}\label{nudef}
\nu:=a\frac{\dt}{\dx}>0
\end{equation}
is the CFL number.  
Then it is easily seen that 
\[
\boxed{\text{positivity preservation of the fully discrete numerical solution}\quad \Longleftrightarrow \quad M\ge 0.}
\]
For one-step methods, $R$ is a rational function, and products and inverses of circulant matrices are also circulant \cite[Fact 5.16.7]{matmat}, so
$M$ is also a real, circulant matrix.
Thus it is defined completely by the entries of its first row, which
are given by
\begin{align} \label{M-entries}
    M_{1,j} & = \frac{1}{m} \sum_{\ell=1}^m R(\nu\lambda_\ell) \exp(-\imath(j-1)\xi_\ell) \quad\quad (1\le j\le m).
\end{align}

\subsection{Second-order centered discretization in space, \texorpdfstring{$\theta$}{}-method in time}\label{sectioncentered}
In what follows we assume $m \ge 3$. Consider the case of a 3-point centered difference approximation in space (having order 2):
\[
    U_x\Big|_{x=x_j} \approx \frac{u_{j+1}-u_{j-1}}{2\dx},
\]
so that $L\in\mathbb{R}^{m\times m}$ is a circulant matrix with entries $(-1/2, 0, 1/2)$ on the central
three diagonals:
\begin{equation}\label{Ldef}
L:=\left(
\begin{array}{cccccc}
 0 & \frac{1}{2} & 0  & \cdots & 0 & -\frac{1}{2} \\
 -\frac{1}{2} & 0 & \frac{1}{2} &  \cdots & 0 & 0 \\
 0 & -\frac{1}{2} & 0 &  \ddots & 0 & 0\\
 \vdots  & \vdots  & \ddots  & \ddots & \ddots & \vdots \\
  0 & 0 & \cdots  & -\frac{1}{2} & 0 & \frac{1}{2} \\
 \frac{1}{2} & 0 & \cdots & 0 & -\frac{1}{2} & 0 \\
\end{array}
\right),
\end{equation}
that is,
\begin{subequations}
\begin{align}
    L_{i,i-1} & \coloneqq -\frac{1}{2},\quad\quad L_{i,i+1} \coloneqq  \frac{1}{2} \\
    L_{1,m} & \coloneqq -\frac{1}{2}, \quad \quad L_{m,1} \coloneqq \frac{1}{2}.
\end{align}
\end{subequations}
It is known that the eigenvalues of $L$ are 
\begin{equation}\label{Leigenvalues}
\lambda_\ell=\imath \sin(\xi_\ell)\quad (1\le\ell\le m).
\end{equation}
Now we consider the $\theta$-method \cite[Chapter IV.3]{hairerwanner} in time, whose stability function 
is
\begin{align}\label{R}
    R(z)  \coloneqq \frac{1+(1-\theta)z}{1-\theta z},
\end{align}
so with the second-order centered difference in space we get from \eqref{M-entries} for $1\le j\le m$ that
the entries of the full discretization matrix are 
\begin{equation}\label{matrix_entries}
\begin{split}
	M_{1,j} &= \frac{1}{m} \sum_{\ell=1}^m \frac{1+(1-\theta)\nu \imath\sin(\xi_\ell)}
		{1-\theta\nu \imath\sin(\xi_\ell)}\exp\left(-\imath(j-1)\xi_\ell\right) \\
		&= \frac{1}{m} \sum_{\ell=1}^{m} \frac{\left(1-\theta(1-\theta)\nu^2\sin^2(\xi_\ell)\right)
		\cos((j-1)\xi_\ell) + \nu \sin(\xi_\ell)\sin((j-1)\xi_\ell)}{1+\theta^2\nu^2 \sin^2(\xi_\ell)} \\
		&\quad - \frac{\imath}{m} \sum_{\ell=1}^{m} \frac{\left(1-\theta(1-\theta)\nu^2\sin^2(\xi_\ell)
		\right)\sin((j-1)\xi_\ell) - \nu \sin(\xi_\ell)\cos((j-1)\xi_\ell)}
		{1+\theta^2\nu^2 \sin^2(\xi_\ell)}.
\end{split}
\end{equation}
Note that the angles $\{(j-1)\xi_\ell\}_{\ell=1}^m$ are symmetric about the $x$-axis if $m$ is odd, and
also if $m$ is even and $j$ is odd.
If both $m$ and $j$ are even, then the angles are symmetric about the origin.
Therefore, we have that
\begin{align*}
	\sum_{\ell=1}^{m} \sin((j-1)\xi_\ell) = 0 \qquad \text{and} \qquad
		\sum_{\ell=1}^{m} \sin(\xi_\ell)\cos((j-1)\xi_\ell) = 0,
\end{align*}
for all $1\le j\le m$  and for any value of $m$.
Moreover the factors
$\frac{1-\theta(1-\theta)\nu^2\sin^2(\xi_\ell)}{1+\theta^2\nu^2 \sin^2(\xi_\ell)}$
and $\frac{\nu}{1+\theta^2\nu^2 \sin^2(\xi_\ell)}$ in \eqref{matrix_entries} keep this symmetry.
Thus, the imaginary part of \eqref{matrix_entries} vanishes, yielding $M_{1,j}\in\mathbb{R}$ for all $j$,
as expected.
So for $1\le j\le m$ we get
\[
  M_{1,j}  = \frac{1}{m} \sum_{\ell=1}^{m} \frac{\left(1-\theta(1-\theta)\nu^2\sin^2(\xi_\ell)\right)
  \cos((j-1)\xi_\ell) + \nu \sin(\xi_\ell)\sin((j-1)\xi_\ell)}{1+\theta^2\nu^2 \sin^2(\xi_\ell)}.
\]
We will also make use of the identities
\begin{align} \label{angle-identities}
    \cos((m-1)\xi_\ell) & = \cos(\xi_\ell), & \sin((m-1)\xi_\ell) & = -\sin(\xi_\ell).
\end{align}
This leads to the following expressions for the first, second and last entries of the first
row of $M$:
\begin{align*}
	M_{1,1} & = \frac{1}{m} \sum_{\ell=1}^m \frac{1-\theta(1-\theta)\nu^2\sin^2(\xi_\ell)}
		{1+\theta^2\nu^2\sin^2(\xi_\ell)}, \\
	M_{1,2} & = \frac{1}{m} \sum_{\ell=1}^{m} \frac{\left(1-\theta(1-\theta)\nu^2\sin^2(\xi_\ell)\right)
		\cos(\xi_\ell) + \nu \sin^2(\xi_\ell)}{1+\theta^2\nu^2 \sin^2(\xi_\ell)}, \\
	M_{1,m} & = \frac{1}{m} \sum_{\ell=1}^m \frac{\left(1-\theta(1-\theta)\nu^2\sin^2(\xi_\ell)\right)
		\cos(\xi_\ell) - \nu \sin^2 (\xi_\ell)}{1+\theta^2\nu^2 \sin^2 (\xi_\ell)}.
\end{align*}
These entries will have a special role in the forthcoming analysis. 
We distinguish two cases.
\begin{description}[style=unboxed,leftmargin=0cm]
\item [{Case 1:} $m$ is {even}.]
\item \noindent By using similar symmetry arguments as before, we conclude that for {even} $m=2k\ge 4$
the entries of matrix $M$ are given by
\begin{align*}
	M_{1,j} = \begin{cases}
				\displaystyle
				\frac{1}{m} \sum_{\ell=1}^{m}\frac{\left(1-\theta(1-\theta)\nu^2\sin^2(\xi_\ell)
					\right)\cos((j-1)\xi_\ell)}{1+\theta^2\nu^2\sin^2(\xi_\ell)}, &\mbox{if } j
					\text{ is odd}, \\[20pt]
				\displaystyle
				\frac{1}{m} \sum_{\ell=1}^{m} \frac{\nu \sin(\xi_\ell)\sin((j-1)\xi_\ell)}
					{1+\theta^2\nu^2 \sin^2 (\xi_\ell)}, &\mbox{if } j \text{ is even}.
			  \end{cases}
\end{align*}
Considering  the above expression for $j = m$ we have
\begin{align*}
    M_{1,m} & =  \frac{1}{m} \sum_{\ell=1}^{m} \frac{- \nu \sin^2 (\xi_\ell)}{1+\theta^2\nu^2 \sin^2 (\xi_\ell)} < 0.
\end{align*}
Thus the discretization using second-order centered differences in space and the $\theta$-method in time cannot preserve positivity when $m$ is even, regardless of the
values of $\theta\in[0,1]$ and $\nu>0$.
We can arrive at the same conclusion by observing that
for any $m=2k\ge 4$ we have $M_{1,2}=-M_{1,m}$, so that one of
these (non-zero) entries must always be negative.
\item [{Case 2:} $m$ is {odd}.]
\item \noindent Let us now consider the case of {odd} $m=2k+1\ge 3$. Then $\sin(\xi_\ell)\ne0$ for
$2\le\ell\le m$.
Writing
\ifjournal
\begin{align*}
	M_{1,1} & = \frac{1}{m} \left(1 + \sum_{\ell=2}^{m} \frac{1-\theta(1-\theta)\nu^2\sin^2(\xi_\ell)}
		{1+\theta^2\nu^2\sin^2(\xi_\ell)}\right) \\
	M_{1,j} & = \frac{1}{m} \left(1 + \sum_{\ell=2}^{m} \frac{(1-\theta(1-\theta)\nu^2\sin^2(\xi_\ell))
		\cos((j-1)\xi_\ell) + \nu \sin(\xi_\ell)\sin((j-1)\xi_\ell)}{1+\theta^2\nu^2 \sin^2(\xi_\ell)}
		\right) \quad (j\ge 2),
\end{align*}
\else
\begin{align*}
	M_{1,1} & = \frac{1}{m} \left(1 + \sum_{\ell=2}^{m} \frac{1-\theta(1-\theta)\nu^2\sin^2(\xi_\ell)}
		{1+\theta^2\nu^2\sin^2(\xi_\ell)}\right) \\
	M_{1,j} & = \frac{1}{m} \left(1 + \sum_{\ell=2}^{m} \frac{(1-\theta(1-\theta)\nu^2\sin^2(\xi_\ell))
		\cos((j-1)\xi_\ell) + \nu \sin(\xi_\ell)\sin((j-1)\xi_\ell)}{1+\theta^2\nu^2 \sin^2(\xi_\ell)}
		\right) \\ && \hspace*{-30pt} (j \ge 2).
\end{align*}
\fi
and taking $\nu \to +\infty$ with $m$ and $\te\in(0,1]$ fixed, we find that
\ifjournal
\begin{align*}
    M_{1,1}^\infty \coloneqq \lim_{\nu \to +\infty} M_{1,1} & = \frac{1}{m} \left(1 - \sum_{\ell=2}^m \frac{1-\theta}{\theta}\right) =  1-\frac{m-1}{m\theta} \\
    M_{1,j}^\infty \coloneqq \lim_{\nu \to +\infty} M_{1,j} & = \frac{1}{m} \left(1 - \sum_{\ell=2}^m \frac{1-\theta}{\theta}\cos((j-1)\xi_\ell)\right) = \frac{1}{m} \left(1+\frac{1-\theta}{\theta}\right) = \frac{1}{m\theta} \quad (j \ge 2).
\end{align*}
\else
\begin{align*}
    M_{1,1}^\infty \coloneqq \lim_{\nu \to +\infty} M_{1,1} & = \frac{1}{m} \left(1 - \sum_{\ell=2}^m \frac{1-\theta}{\theta}\right) =  1-\frac{m-1}{m\theta} \\
    M_{1,j}^\infty \coloneqq \lim_{\nu \to +\infty} M_{1,j} & = \frac{1}{m} \left(1 - \sum_{\ell=2}^m \frac{1-\theta}{\theta}\cos((j-1)\xi_\ell)\right) = \frac{1}{m} \left(1+\frac{1-\theta}{\theta}\right) = \frac{1}{m\theta} \\ && \hspace*{-20pt} (j \ge 2).
\end{align*}
\fi
We see that
\[M_{1,j}^\infty>0 \text{ for all } 2\le j\le m \text{ and } \te\in(0,1],
\]
 while 
\[M_{1,1}^\infty>0 \quad\Longleftrightarrow\quad \theta>\frac{m-1}{m}.
\]
Thus for fixed $m\ge 3$ and $\theta>\frac{m-1}{m}$, the matrix $M$ is non-negative 
if $\nu>0$ is large enough.

We now show that $M\ge 0$ also holds for $\theta=\frac{m-1}{m}$ with $m$ fixed and for $\nu>0$ large enough. 
Clearly, we only need to verify the non-negativity of entry $M_{1,1}$ for $\nu>0$ large enough. 
In fact, for $\theta=\frac{m-1}{m}$ and for \emph{any} $\nu>0$ we have $M_{1,1}> 0$. To see this, consider a summand with $2\le\ell\le m$ in $M_{1,1}$: 
\[
 \frac{1-\theta(1-\theta)\nu^2\sin^2(\xi_\ell)}{1+\theta^2\nu^2\sin^2(\xi_\ell)}\Bigg|_{\theta=\frac{m-1}{m}}=
 \frac{m^2-(m-1) \nu ^2 \sin ^2\left(\xi _\ell\right)}{m^2+(m-1)^2 \nu ^2 \sin ^2\left(\xi _\ell\right)}=:\varphi(\nu,\ell).
\]
Its partial derivative with respect to $\nu$ is
\[
\partial_\nu\varphi(\nu,\ell)=-\frac{2 (m-1) m^3 \nu  \sin ^2\left(\xi _\ell\right)}{\left(m^2+(m-1)^2 \nu ^2 \sin ^2\left(\xi _\ell\right)\right)^2}<0,
\]
and $\varphi(0,\ell)=1$, hence the function
\begin{align*}
	\nu\mapsto M_{1,1}\Big|_{\theta=\frac{m-1}{m}} =
		\frac{1}{m} \left(1 + \sum_{\ell=2}^{m} \varphi(\nu,\ell)\right)
\end{align*}
is positive at $\nu=0$, strictly decreases, and its limit when $\nu\to +\infty$ is
$M_{1,1}^\infty\big|_{\theta=\frac{m-1}{m}}=0$, completing the proof of the claim.\\

Summarizing the above, we have proved the following for any $\nu>0$.
\end{description}
\begin{theorem}\label{thm1}
Consider the advection equation \eqref{advection} with periodic boundary condition discretized using
$2^{\text{nd}}$-order centered differences in space and the $\theta$-method in time with $m\ge 3$ spatial grid
points and $\te\in[0,1]$.
The full discretization takes the form \eqref{M},
where\\
(i) if $m$ is even, then $M$ has at least one negative entry;\\
(ii) if $m$ is odd and $\theta\in\left[\frac{m-1}{m},1\right]$, then for large enough $\dt$ all
entries of $M$ are non-negative.
\end{theorem}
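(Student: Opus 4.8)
The plan is to work directly from the explicit real-valued expression for the first-row entries $M_{1,j}$ obtained above through discrete Fourier analysis, and to split according to the parity of $m$; in each case the claim reduces either to the sign of a single entry or to a family of elementary scalar limits in $\nu$.

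For part~(i), I would use the even-$m$ formula. For $m=2k\ge 4$ and $j$ even the entry is $M_{1,j}=\frac{1}{m}\sum_{\ell=1}^m \frac{\nu\sin(\xi_\ell)\sin((j-1)\xi_\ell)}{1+\theta^2\nu^2\sin^2(\xi_\ell)}$, and setting $j=m$ together with the identity $\sin((m-1)\xi_\ell)=-\sin(\xi_\ell)$ collapses this to $M_{1,m}=-\frac{1}{m}\sum_{\ell=1}^m \frac{\nu\sin^2(\xi_\ell)}{1+\theta^2\nu^2\sin^2(\xi_\ell)}$. Since $\nu>0$ and $\sin(\xi_\ell)\ne 0$ for at least one index (e.g.\ $\ell=2$, as $m\ge 4$), every summand is non-negative and at least one is strictly positive, so $M_{1,m}<0$ for all $\theta\in[0,1]$ and $\nu>0$. (Equivalently, one checks $M_{1,2}=-M_{1,m}$, which forces one of these two non-zero entries to be negative.) This proves~(i).

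For part~(ii), with $m=2k+1$ odd I would first note $\sin(\xi_\ell)\ne 0$ for $2\le\ell\le m$, isolate the $\ell=1$ term, and compute the pointwise limits as $\nu\to+\infty$ with $m$ and $\theta\in(0,1]$ fixed. The sine-type contributions vanish like $1/\nu$, while each cosine-type summand tends to $-\frac{1-\theta}{\theta}\cos((j-1)\xi_\ell)$; using $\sum_{\ell=2}^m\cos((j-1)\xi_\ell)=-1$ for $j\ge 2$ this yields $M_{1,j}^\infty=\frac{1}{m\theta}>0$ for $j\ge 2$ and $M_{1,1}^\infty=1-\frac{m-1}{m\theta}$. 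When $\theta>\frac{m-1}{m}$ all these limits are strictly positive; since there are only finitely many entries and each $\nu\mapsto M_{1,j}(\nu)$ is continuous, there is a threshold $\nu_0$ beyond which every entry is positive, which settles the open interval.

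The one delicate point---and the main obstacle---is the boundary value $\theta=\frac{m-1}{m}$, where $M_{1,1}^\infty=0$ and the limiting argument is silent about the sign of $M_{1,1}$ at large but finite $\nu$. I would close this by a monotonicity argument showing $M_{1,1}>0$ for \emph{every} $\nu>0$: at $\theta=\frac{m-1}{m}$ each summand equals $\varphi(\nu,\ell)=\frac{m^2-(m-1)\nu^2\sin^2(\xi_\ell)}{m^2+(m-1)^2\nu^2\sin^2(\xi_\ell)}$ with $\varphi(0,\ell)=1$ and $\partial_\nu\varphi(\nu,\ell)<0$ for $\nu>0$, so $\nu\mapsto M_{1,1}=\frac{1}{m}(1+\sum_{\ell=2}^m\varphi(\nu,\ell))$ is positive at $\nu=0$, strictly decreasing, and tends to $0$; hence it remains positive for all finite $\nu$. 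Combined with $M_{1,j}^\infty=\frac{1}{m\theta}>0$ for $j\ge 2$ and continuity, this gives $M\ge 0$ for all $\nu$ large enough---equivalently for all $\dt$ large enough---at the boundary as well, completing~(ii).
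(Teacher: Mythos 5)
Your proposal is correct and follows essentially the same route as the paper: the sign of $M_{1,m}$ (or equivalently $M_{1,2}=-M_{1,m}$) for even $m$, the $\nu\to+\infty$ limits $M_{1,1}^\infty=1-\frac{m-1}{m\theta}$ and $M_{1,j}^\infty=\frac{1}{m\theta}$ for odd $m$, and the monotonicity argument via $\varphi(\nu,\ell)$ to handle the boundary case $\theta=\frac{m-1}{m}$. No gaps.
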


A refinement of Theorem~\ref{thm1} for odd values of $m$ will be given at the end of
Section~\ref{section3}; see Theorem~\ref{thm2}.
As for the interval $\theta\in\left[\frac{m-1}{m},1\right]$ appearing in Theorem~\ref{thm1}, see also
Figures~\ref{fig_variousk}--\ref{fig_boundary}.

\begin{remark} 
In the formulae leading to Theorem~\ref{thm1} we used a trigonometric representation of the matrix
\emph{entries} $M_{1,j}$. Here we highlight a related approach to studying the non-negativity of $M$ by
relying only on the \emph{eigenvalues} $\sigma_\ell$ ($1\le \ell\le m$) of $M$. According to
\eqref{Mdualdef}, \eqref{Leigenvalues} and \eqref{R}, we have
\[
\sigma_\ell:=R(\nu\lambda_\ell)=\frac{1+ (1-\theta )\nu \imath \sin \left(\xi _\ell\right)}{1- \theta\nu \imath  \sin \left(\xi _\ell\right)}.
\]
The main question in the context of \emph{non-negative inverse eigenvalue problems} is to 
find (necessary or sufficient) conditions for a set $\Sigma:=\{\sigma_1,\ldots,\sigma_m\}\subset\mathbb{C}$ to be the spectrum of \emph{some} non-negative $m \times m$ matrix.
One such condition is the following. It is known \cite[Chapter 4]{nonnegmatr} that if $\Sigma$ is the spectrum of 
an $m \times m$ non-negative matrix, then 
\begin{equation}\label{sigmampq}
\forall\, p, q \in\nplus:\quad\quad 0\le \left(\sum_{j=1}^m \sigma_j^{\,p}\right)^q\le m^{q-1} \sum_{j=1}^m \sigma_j^{\,p q}.
\end{equation}
For example, for $m=5$ and $\te=1$, \eqref{sigmampq} with  $p\in\{1,\ldots,9\}$ and $q\in\{2,3\}$ yields the lower bounds
\begin{equation}\label{nustartpq}
\nu\ge \nu_{*}(p,q),
\end{equation}
where the approximate values of $\nu_{*}(p,q)$ are given below:
\[
\begin{array}{|c|c|c|c|c|c|c|c|c|c|}
\hline
\nu_{*}(p,q) & p=1 & p=2 & p=3 & p=4 & p=5 & p=6 & p=7 & p=8 & p=9 \\
\hline
q=2 & 3.0074 & 1.462 & 0.9669 & 0.7219 & 0.5753 & 0.4778 & 0.4082 & 0.3563 & 0.3160 \\
\hline
q=3 & 2.1497 & 1.0269 & 0.6694  & 0.4941 & 0.3907 & 0.3227  & 0.2749  & 0.2393  &  0.2119 \\
\hline
\end{array}
\]
As we see, the necessary condition \eqref{sigmampq}---valid for \emph{any} non-negative matrix---already 
implies that there are positive \emph{lower} bounds on $\nu$, although these bounds are not optimal. 

It is possible to sharpen the lower bounds in \eqref{nustartpq} by making use of some more specific results. 
We know in addition that the matrix $M$ is \emph{circulant}, which leads us to the realm of 
\emph{structured non-negative inverse eigenvalue problems}. For example, 
the spectra of non-negative circulant matrices have been characterized (with a necessary \emph{and} sufficient condition) in \cite[Theorem 10]{rojosoto}. From this theorem we get (still for $m=5$ and $\te=1$) the lower bound
\[
\nu\ge 3.9173.
\]
As we will see, the precise lower bound for this matrix---according to our Theorem~\ref{thm2} with $k=2$ and $\te=1$---is
\[
\nu\ge \nu_R(2,1)\approx 4.4111.
\]
\end{remark}

\begin{remark}
It is not restrictive to assume $a>0$ in \eqref{advection}. If we assumed $a<0$ instead, then the results of Theorems~\ref{thm1} and \ref{thm2} would remain valid (together with Figures~\ref{fig_variousk}--\ref{fig_boundary}, for example), with all the arguments in their proofs being essentially the same.  For example, as we will see in Section~\ref{section3}, the non-negativity of (the first row of) matrix $M$ is governed by the elements $M_{1,1}$ and $M_{1,m}$ for $a>0$ and $m$ odd---this would change to elements $M_{1,1}$ and $M_{1,2}$ for $a<0$ and $m$ odd.
\end{remark}


\section{Second-order centered discretization in space and \texorpdfstring{$\theta$}{}-method in time---algebraic characterization of the entries of the full discretization matrix}\label{section3}

The results of Section~\ref{sectiondiscFourier} are based on the eigendecomposition of the full discretization matrix $M=\cF R(\nu \Lambda) \cF^*$. In this section, instead of using trigonometric functions, we give an algebraic description of the matrix entires by exploiting the relation $M=R(\nu L)$ in \eqref{Mdualdef} with $L$ defined in \eqref{Ldef}.
Explicitly, this means 
\begin{equation}\label{Mdef}
M(m,\te,\nu)=(I-\te\nu L)^{-1}(I+(1-\te)\nu L)\in\mathbb{R}^{m\times m},
\end{equation}
but the dependence of $M$ on its parameters will often be suppressed. 

It is trivial that for $\te=0$ we have $M(m,0,\nu)=I+\nu L$, hence $M\ge 0$ cannot hold for any $\nu>0$. The case $m=2k$ has been discussed in Section~\ref{sectioncentered}. Thus, throughout the rest of this section, we can assume that
\begin{equation}\label{genassump}
\boxed{ 
m=2k+1\quad (k\in\nplus), \ \ \ \nu>0 \text{\ \  and\ \  } 0<\te\le 1.}
\end{equation}

\subsection{Explicit description of the matrix entries for odd values of \texorpdfstring{$m$}{}}\label{explsect}


To illustrate the structure of $M$, we present its first row (as a vector, and with the common denominator of the entries in front of it) for the smallest values of $m$. 
\begin{example}\label{example1} 
For $m=3$ the first row of \eqref{Mdef} is 
\[
\frac{1}{{3 \theta ^2 \nu ^2}/{4}+1}\left(\frac{3 \theta ^2 \nu ^2}{4}-\frac{\theta  \nu ^2}{2}+1,\frac{\theta  \nu ^2}{4}+\frac{\nu }{2},\frac{\theta  \nu ^2}{4}-\frac{\nu }{2}\right),
\]
while for $m=5$ we have 
\[
\frac{1}{{5 \theta ^4 \nu ^4}/{16}+{5 \theta ^2 \nu ^2}/{4}+1}\left(\frac{5 \theta ^4 \nu ^4}{16}-\frac{\theta ^3 \nu ^4}{4}+\frac{5 \theta ^2 \nu ^2}{4}-\frac{\theta  \nu ^2}{2}+1,\right.
\]
\[
\left.\frac{\theta ^3 \nu
   ^4}{16}+\frac{\theta ^2 \nu ^3}{4}+\frac{\nu }{2},\frac{\theta ^3 \nu ^4}{16}-\frac{\theta ^2 \nu ^3}{8}+\frac{\theta  \nu ^2}{4},\frac{\theta ^3
   \nu ^4}{16}+\frac{\theta ^2 \nu ^3}{8}+\frac{\theta  \nu ^2}{4},\frac{\theta ^3 \nu ^4}{16}-\frac{\theta ^2 \nu ^3}{4}-\frac{\nu }{2}\right).
\]

\end{example}

Each element of $M$ is a rational function in the variables $\te$ and $\nu$. From \eqref{Mdef} it is clear that 
\begin{equation}\label{M1jPjkDk}
M_{1,j}=\frac{\cP_{j,k}(\te,\nu) }{\cD_k(\te,\nu)}\quad\quad(j=1,2,\ldots,2k+1),
\end{equation}
where $\cP_{j,k}$ and $\cD_{k}$ are certain bivariate polynomials in $\te$ and $\nu$, and 
\begin{equation}\label{dendet}
\cD_k:=\det\left(I_{(2k+1)\times(2k+1)}-\te\nu L_{(2k+1)\times(2k+1)}\right).
\end{equation}
\begin{remark}
The subscripts of $\cP_{j,k}$ thus refer to the position of the polynomial within the first row of $M$, and the size of $M\in\mathbb{R}^{(2k+1)\times(2k+1)}$, respectively.
\end{remark}

The key to describing $M$ algebraically is the observation that the polynomials $\cP_{j,k}$ and $\cD_{k}$ satisfy certain low-order linear recursions with constant coefficients. As already indicated by Section~\ref{sectioncentered}, the leftmost entry ($j=1$) behaves differently than the rest ($2\le j\le 2k+1$).

\begin{remark}
 \textit{Mathematica}'s  {\tt{FindLinearRecurrence}} command proved to be an efficient tool for discovering these linear recursions. 
\end{remark}
First, let us introduce some new variables. On the one hand, as suggested by Example \ref{example1}, it seems convenient to set
\[
\mu:=\te^2\nu^2>0.
\]
Then, due to the sign assumptions, $\sqrt{\mu}=\te\nu$. On the other hand, as we will soon see, the polynomial 
\[
\kappa ^2-\kappa  \left(1+\frac{\mu }{2}\right)+\frac{\mu ^2}{16}
\]
will appear as a (factor of a) characteristic polynomial, and its roots are
\begin{equation}\label{kappa12}
\kappa_{1,2}=\frac{2+\mu \pm 2 \sqrt{\mu +1}}{4}=\left(\frac{\sqrt{1+\mu}\pm 1}{2}\right)^2.
\end{equation}
This motivates us to introduce yet another variable, which will further simplify our exposition. We set
\begin{equation}\label{ydef01}
y:=\frac{\sqrt{1+\mu}-1}{\sqrt{\mu}}=\frac{\sqrt{1+\te^2\nu^2}-1}{\te\nu}\in (0,1).
\end{equation}
It is seen that the transformation
\[
(0,+\infty)\ni\mu \longleftrightarrow y\in(0,1)
\]
is a bijection. Moreover, the following (inverse) relations 
\[
\mu=\left(\frac{2y}{1-y^2}\right)^2,
\]
\[\mu y^2=2+\mu-2\sqrt{1+\mu},\]
\[\mu/y^2=2+\mu+2\sqrt{1+\mu},\]
and
\begin{equation}\label{fromytonu}
\nu=\frac{2y}{1-y^2}\cdot\frac{1}{\te}
\end{equation}
are easily verified. We can now start describing the entries of the first row of $M$.
\begin{remark}
Although the expressions $\cP_{j,k}$ and $\cD_{k}$ will become in general rational functions in the variable $y$, we still call them polynomials (referring to their structure in the original variables $\te$ and $\nu$). 
\end{remark}
$\bullet$ The polynomials $\cD_k$. By carrying out some determinant expansions, we see that the determinants \eqref{dendet} obey the second-order parametric recursion
\begin{equation}\label{Dkrec}
\cD_{k+2}=\left(1+\frac{\mu}{2}\right)\cD_{k+1}-\frac{\mu^2}{16}\cD_k
\end{equation}
with initial conditions
\[
\cD_1=1+\frac{3\mu}{4}, \quad \cD_2=1+\frac{5\mu}{4}+\frac{5\mu^2}{16}
\]
(cf.~Example \ref{example1}). After solving this recursion, we obtain
\[
\cD_k=\left(\frac{\sqrt{1+\mu}+1}{2}\right)^{2 k+1}- \left(\frac{\sqrt{1+\mu}-1}{2}\right)^{2 k+1},
\]
which, in terms of the variable $y$, becomes
\begin{equation}\label{expldet}
\cD_k=\frac{1-y^{4 k+2}}{\left(1-y^2\right)^{2 k+1} }.
\end{equation}

$\bullet$  The polynomials $\cP_{1,k}$. They satisfy the recursion 
\[
\cP_{1,k+2}=\left(1+\frac{\mu}{2}\right)\cP_{1,k+1}-\frac{\mu^2}{16}\cP_{1,k},
\]
that is, with coefficients being the same as in \eqref{Dkrec}, but with initial conditions
\[
\cP_{1,1}=1+\frac{3 \mu }{4}-\frac{\mu /\theta}{2  }, \quad \cP_{1,2}=1+\frac{5 \mu }{4}+\frac{5 \mu ^2}{16}-\frac{\mu/\theta }{2  }-\frac{\mu ^2/\theta}{4  }
\]
(cf.~Example \ref{example1}). By solving this recursion, we derive that
\begin{equation}\label{P1kexpl}
\cP_{1,k}=\frac{\Pol}{  \left(1+y^2\right)\left(1-y^2\right)^{2 k+1}\theta},
\end{equation}
where the numerator is 
\begin{equation}\label{poldef}
\Pol:=-\theta  y^{4 k+4}-(\theta -2) y^{4 k+2}+(\theta -2) y^2+\theta.
\end{equation}
\begin{remark}
Here, the subscript $L$ stands for \emph{leftmost}. This polynomial will play a special role in the next section.
\end{remark}

$\bullet$  The polynomials $\cP_{2,k}$. They satisfy a third-order recursion in the variable $k$,
\begin{equation}\label{cP2rec}
\cP_{2,k+3}=\left(1+\frac{3\mu}{4}\right)\cP_{2,k+2}-\left(\frac{\mu }{4}+\frac{3 \mu ^2}{16}\right)\cP_{2,k+1}+\frac{\mu ^3}{64}\cP_{2,k},
\end{equation}
with initial conditions 
\[
\cP_{2,1}=\left(\frac{1}{2}+\frac{\sqrt{\mu }}{4}\right) \nu, \quad \cP_{2,2}=\left(\frac{1}{2}+\frac{\mu }{4}+\frac{\mu ^{3/2}}{16}\right) \nu,
\]
\[
\cP_{2,3}=\left(\frac{1}{2}+\frac{\mu }{2}+\frac{3 \mu ^2}{32}+\frac{\mu ^{5/2}}{64}\right) \nu.
\]
The  characteristic polynomial of recursion \eqref{cP2rec} is 
\[
\kappa ^3-\kappa ^2 \left(1+\frac{3 \mu }{4}\right)+\kappa  \left(\frac{\mu }{4}+\frac{3 \mu ^2}{16}\right)-\frac{\mu ^3}{64}=\left(\kappa-\frac{\mu }{4}\right)\left(\kappa ^2-\kappa  \left(1+\frac{\mu }{2}\right)+\frac{\mu ^2}{16}\right),
\]
hence the characteristic roots are $\kappa_{1,2}$ as in \eqref{kappa12}, and $\kappa_3={\mu }/{4}$. Based on this, one easily obtains the explicit solution as
\begin{equation}\label{cP2kexpl}
\cP_{2,k}=\frac{\nu  \left(1-y^2\right)^{1-2 k} \left(1+y^{2 k-1}+y^{2 k+1}-y^{4 k}\right)}{2 \left(1+y^2\right)}.
\end{equation}

$\bullet$  The polynomials $\cP_{3,k}$. They satisfy the same third-order recursion in the variable $k$ as \eqref{cP2rec},
\[
\cP_{3,k+3}=\left(1+\frac{3\mu}{4}\right)\cP_{3,k+2}-\left(\frac{\mu }{4}+\frac{3 \mu ^2}{16}\right)\cP_{3,k+1}+\frac{\mu ^3}{64}\cP_{3,k},
\]
but with initial conditions 
\[
\cP_{3,1}=\left(-\frac{1}{2}+\frac{\sqrt{\mu }}{4}\right) \nu, \quad \cP_{3,2}=\left(\frac{\sqrt{\mu} }{4}-\frac{\mu}{8}+\frac{\mu ^{3/2}}{16}\right) \nu,
\]
\[
\cP_{3,3}=\left(\frac{\sqrt{\mu }}{4}-\frac{\mu ^2}{32}+\frac{3 \mu ^{3/2}}{16}+\frac{\mu ^{5/2}}{64}\right) \nu.
\]
The explicit solution of this recursion is
\begin{equation}\label{cP3kexpl}
\cP_{3,k}=\frac{\nu  \left(1-y^2\right)^{1-2 k} \left(y-y^{2 k-2}+y^{2 k+2}+y^{4 k-1}\right)}{2 \left(1+y^2\right)}.
\end{equation}
\begin{remark}
We note that, for any \emph{fixed} $j\ge 2$, the polynomials $\cP_{j,k}$ satisfy the same third-order recursion \eqref{cP2rec} in the variable $k$, with triplets of initial conditions depending on $j$. However, we cannot use this approach to proceed, since setting up the initial conditions would require, among others, the knowledge of the  polynomials $\cP_{j,1}$ (for $j=2, 3$),  $\cP_{j,2}$ (for $j=4, 5$), $\cP_{j,3}$ (for $j=6, 7$), and so on. 
\end{remark}

$\bullet$  The polynomials $\cP_{j,k}$ ($4\le j\le 2k+1$, $k\ge 2$). They satisfy the following second-order recursion in the variable $j$ when $k$ is \emph{fixed} (hence having only finitely many terms for a particular $k$):  
\[
\cP_{j+2,k}=-\frac{2}{\sqrt{\mu }}\cP_{j+1,k}+\cP_{j,k}.
\]
For the initial conditions of this final recursion, we use the general forms of $\cP_{2,k}$ and $\cP_{3,k}$ in \eqref{cP2kexpl} and \eqref{cP3kexpl} to get for any $k\ge 1$ and $2\le j\le 2k+1$ that 
\begin{equation}\label{Pjkexpl}
\cP_{j,k}=\frac{\nu  \left(1-y^2\right)^{1-2 k}}{2 \left(1+y^2\right)}P_{j,k}(y), 
\end{equation}
where the polynomials $P_{j,k}$ are defined as
\begin{equation}\label{Pjky}
P_{j,k}(y)\coloneqq(-1)^{j-1} y^{4 k+2-j}+y^{2 k-1+j}+(-1)^j y^{2 k+1-j}+y^{j-2}.
\end{equation}
As a special case, we set
\[
\Por:=P_{2k+1,k}(y),
\]
in other words we have
\begin{equation}\label{pordef}
\Por=y^{4 k}+y^{2 k+1}+y^{2 k-1}-1,
\end{equation}
where the subscript $R$ stands for \textit{rightmost}.

\begin{remark}
As a by-product, we have obtained the following set of identities by comparing the trigonometric and algebraic representations presented so far. They are also interesting from a structural point of view: although the number of terms in the trigonometric sums increases as $k$ gets larger, the polynomials in $y$ are sparse polynomials (also known as lacunary polynomials or fewnomials)---the number of terms does not increase as the polynomial degree increases.
\end{remark}
\begin{corollary} \label{corollary1}
With $M$ defined in \eqref{Mdef}, $\te>0$, $\nu>0$, $k\in\nplus$, $y=\frac{\sqrt{1+\te^2\nu^2}-1}{\te\nu}$, and $\xi_\ell  = \frac{2\pi(\ell-1)}{2k+1}$, we have that
\[
 \frac{1}{2k+1} \sum_{\ell=1}^{2k+1} \frac{1+\imath(1-\theta)\nu \sin(\xi_\ell)}{1-\imath\theta\nu  \sin(\xi_\ell)}=
\]
\[
M_{1,1}=\frac{\cP_{1,k}}{\cD_k}=
\]
\[
\frac{-\theta  y^{4 k+4}-(\theta -2) y^{4 k+2}+(\theta -2) y^2+\theta}{  \left(1+y^2\right)\left(1-y^{4 k+2}\right)\theta}.
\]
Moreover, for $j=2, 3, \ldots, 2k+1$ we have that
\[
\frac{1}{2k+1} \sum_{\ell=1}^{2k+1} \frac{1+\imath(1-\theta)\nu \sin(\xi_\ell)}{1-\imath\theta\nu \sin(\xi_\ell)}\exp\left(-\imath(j-1)\xi_\ell\right)=
\]
\[
M_{1,j}=\frac{\cP_{j,k}}{\cD_k}=
\]
\[
\frac{\nu  \left(1-y^2\right)^2 }{2 \left(1+y^2\right) \left(1-y^{4 k+2}\right)}\Big((-1)^{j-1} y^{4 k+2-j}+y^{2 k-1+j}+(-1)^j y^{2 k+1-j}+y^{j-2}\Big).
\]
In particular,
\[
\prod_{\ell=1}^{2k+1} (1-\imath\theta\nu \sin(\xi_\ell))=\cD_k=
\]
\[
\left(\frac{\sqrt{1+\te^2\nu^2}+1}{2}\right)^{2 k+1}- \left(\frac{\sqrt{1+\te^2\nu^2}-1}{2}\right)^{2 k+1}=
\frac{1-y^{4 k+2}}{\left(1-y^2\right)^{2 k+1} }.
\]

\end{corollary}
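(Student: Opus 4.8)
The statement is essentially a compilation: it records that each first-row entry $M_{1,j}$ admits three equivalent descriptions---the discrete-Fourier (trigonometric) average, the ratio $\cP_{j,k}/\cD_k$ of bivariate polynomials, and the closed sparse form in the variable $y$---together with the product representation of $\cD_k$. Accordingly, the plan is to establish each chained equality by assembling the formulas already derived in this section, treating $j=1$ and $2\le j\le 2k+1$ separately, as the preceding analysis suggests.

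First, for the trigonometric identities I would start from the Fourier expression \eqref{M-entries} for $M_{1,j}$ and substitute the eigenvalues $\lambda_\ell=\imath\sin(\xi_\ell)$ of \eqref{Leigenvalues} into the stability function \eqref{R}. Since $\nu\lambda_\ell=\imath\nu\sin(\xi_\ell)$, this yields
\[
R(\nu\lambda_\ell)=\frac{1+\imath(1-\te)\nu\sin(\xi_\ell)}{1-\imath\te\nu\sin(\xi_\ell)},
\]
and inserting this together with the factor $\exp(-\imath(j-1)\xi_\ell)$ reproduces verbatim the trigonometric sums on the left of each displayed line (the exponential factor being $1$ when $j=1$). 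This settles the first equality in each group.

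Next, for the algebraic identities I would invoke \eqref{M1jPjkDk}, which already gives $M_{1,j}=\cP_{j,k}/\cD_k$, and substitute the explicit solutions of the parametric recursions: \eqref{expldet} for the denominator, \eqref{P1kexpl}--\eqref{poldef} for $j=1$, and \eqref{Pjkexpl}--\eqref{Pjky} for $2\le j\le 2k+1$. The only manipulation needed is the cancellation of the common factor $(1-y^2)^{2k+1}$ between the numerators and $\cD_k$: for $j=1$ this collapses $\cP_{1,k}/\cD_k$ to $\Pol/\bigl((1+y^2)(1-y^{4k+2})\te\bigr)$, while for $j\ge 2$ the exponent bookkeeping $1-2k+(2k+1)=2$ produces the prefactor $\nu(1-y^2)^2/\bigl(2(1+y^2)(1-y^{4k+2})\bigr)$ multiplying $P_{j,k}(y)$. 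For the product representation of $\cD_k$ I would instead use its definition $\cD_k=\det(I-\te\nu L)$ from \eqref{dendet}: since $L$ has eigenvalues $\imath\sin(\xi_\ell)$, the matrix $I-\te\nu L$ has eigenvalues $1-\imath\te\nu\sin(\xi_\ell)$, so its determinant is the claimed product; the power-difference and $y$-forms then follow from \eqref{expldet} and the defining relation \eqref{ydef01}, via $\tfrac{1}{1-y^2}=\tfrac{\sqrt{1+\mu}+1}{2}$ with $\mu=\te^2\nu^2$.

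Since the analytic substance---solving the recursions to obtain the closed forms---has already been carried out, the corollary itself is largely bookkeeping, and I do not anticipate a genuine obstacle. The one point demanding care is the algebraic cancellation: one must track the exact powers of $(1-y^2)$ and the stray $\te$- and $\nu$-factors so that the ratios collapse to the advertised sparse polynomials, and confirm that the two expressions for $\cD_k$ agree through the $\mu$--$y$ dictionary of \eqref{ydef01}--\eqref{kappa12}. If one does not simply take the earlier closed forms as given, the most tedious step is checking that each claimed solution satisfies its recursion and initial conditions, which is routine.
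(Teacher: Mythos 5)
Your proposal is correct and follows essentially the same route as the paper, which presents Corollary~\ref{corollary1} without a separate proof precisely because it is the juxtaposition of the discrete-Fourier representation from \eqref{M-entries} with \eqref{Leigenvalues} and \eqref{R}, the factorization \eqref{M1jPjkDk}, and the closed forms \eqref{expldet}, \eqref{P1kexpl} and \eqref{Pjkexpl} obtained from the recursions. Your bookkeeping of the $(1-y^2)$ powers and the eigenvalue-product argument for $\cD_k$ are exactly the intended (implicit) verifications.
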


\subsection{Non-negativity of the matrix entries for odd values of \texorpdfstring{$m$}{}}\label{nonnegsect}

In this section we present a detailed description of the non-negativity properties of the matrix $M$, thanks to the explicit forms for the entries $M_{1,j}$ obtained in Section \ref{explsect}. Throughout this section we still assume \eqref{genassump}.

By taking into account \eqref{M1jPjkDk}, \eqref{expldet}, \eqref{P1kexpl}, \eqref{poldef}, \eqref{Pjkexpl}, \eqref{Pjky}, and the fact that now $y\in(0,1)$ (see \eqref{ydef01}), the following corollary is evident.
\begin{corollary}\label{cor2} For a given pair $(\te,\nu)$
\[
M_{1,1}(2k+1,\te,\nu)\ge 0 \quad \Longleftrightarrow \quad \Pol\ge 0\quad\text{ (see } \eqref{poldef}\text{)},
\]
and for any $2\le j\le 2k+1$
\[
M_{1,j}(2k+1,\te,\nu)\ge 0 \quad \Longleftrightarrow \quad P_{j,k}(y)\ge 0\quad\text{ (see } \eqref{Pjky}\text{)}.
\]
\end{corollary}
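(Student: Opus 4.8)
The plan is to prove Corollary~\ref{cor2} as an immediate consequence of the explicit formulae for the matrix entries established earlier, by verifying that multiplying each rational expression $M_{1,j}$ by a manifestly positive factor isolates exactly the polynomial $\Pol$ (for $j=1$) or $P_{j,k}(y)$ (for $2\le j\le 2k+1$) as the sole factor that can change sign. The key observation throughout is that, under the standing assumption \eqref{genassump}, we have $\te>0$, $\nu>0$, and, by \eqref{ydef01}, $y\in(0,1)$, so that $y$, $1-y^2$, $1+y^2$, and $1-y^{4k+2}$ are all strictly positive. These positivity facts are what allow us to discard every factor except the target polynomial.

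First I would handle the leftmost entry $j=1$. By \eqref{M1jPjkDk} we have $M_{1,1}=\cP_{1,k}/\cD_k$, and substituting the explicit forms \eqref{P1kexpl}--\eqref{poldef} for the numerator and \eqref{expldet} for the denominator gives, after cancelling the common power $(1-y^2)^{2k+1}$,
\[
M_{1,1}=\frac{\Pol}{\left(1+y^2\right)\left(1-y^{4k+2}\right)\te}
\]
exactly as recorded in Corollary~\ref{corollary1}. Since $\left(1+y^2\right)\left(1-y^{4k+2}\right)\te>0$ for $y\in(0,1)$ and $\te>0$, the sign of $M_{1,1}$ coincides with the sign of $\Pol$; hence $M_{1,1}\ge 0 \Leftrightarrow \Pol\ge 0$, which is the first claimed equivalence.

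Next I would treat the remaining entries $2\le j\le 2k+1$ in one stroke. Combining \eqref{M1jPjkDk} with the unified formula \eqref{Pjkexpl} for the numerator $\cP_{j,k}$ and with \eqref{expldet} for $\cD_k$, the power $(1-y^2)^{1-2k}$ in the numerator meets $(1-y^2)^{-(2k+1)}$ in the reciprocal denominator to leave a positive factor $(1-y^2)^2$, yielding
\[
M_{1,j}=\frac{\nu\left(1-y^2\right)^2}{2\left(1+y^2\right)\left(1-y^{4k+2}\right)}\,P_{j,k}(y),
\]
again matching Corollary~\ref{corollary1}. The prefactor $\frac{\nu\left(1-y^2\right)^2}{2\left(1+y^2\right)\left(1-y^{4k+2}\right)}$ is strictly positive for $\nu>0$ and $y\in(0,1)$, so $M_{1,j}\ge 0 \Leftrightarrow P_{j,k}(y)\ge 0$, completing the second equivalence.

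There is no substantive obstacle here: the corollary is labelled \emph{evident} precisely because all the analytic work was done in Section~\ref{explsect}, and what remains is only bookkeeping. The one point demanding a little care is confirming that the surviving prefactors never vanish and never change sign on the relevant parameter range—this is exactly where the inclusion $y\in(0,1)$ from the bijection \eqref{ydef01}, together with $\te,\nu>0$, is essential, and it is worth stating explicitly so that the strict positivity (and hence the sign equivalences, rather than merely sign implications) is beyond doubt.
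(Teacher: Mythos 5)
Your proposal is correct and follows exactly the route the paper intends: the paper simply cites \eqref{M1jPjkDk}, \eqref{expldet}, \eqref{P1kexpl}, \eqref{poldef}, \eqref{Pjkexpl}, \eqref{Pjky} and the fact that $y\in(0,1)$ and declares the corollary ``evident,'' while you carry out the same cancellations and positivity checks explicitly. Nothing is missing and nothing differs in substance; you have merely written out the bookkeeping the authors left implicit.
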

\begin{figure}
\begin{center}
\includegraphics[width=0.5\textwidth]{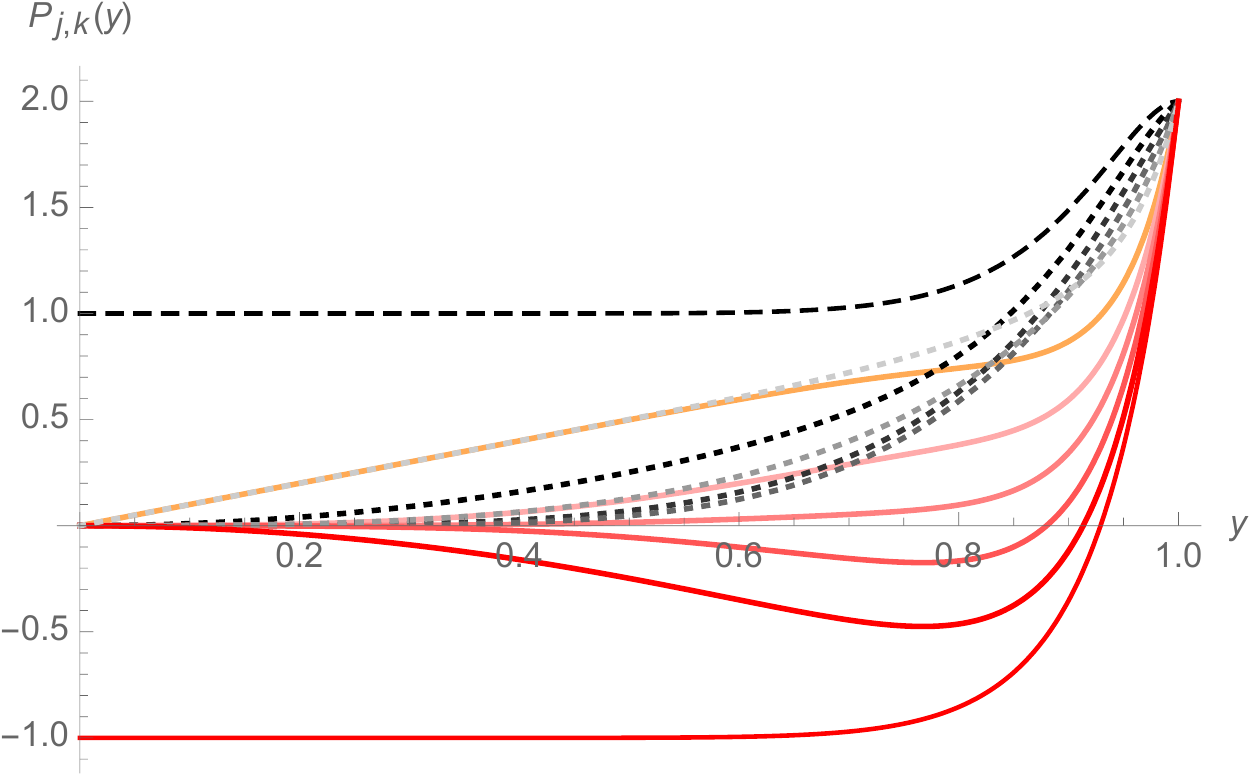}
\caption{The typical behavior of the polynomials $P_{j,k}$ appearing in Corollary \ref{cor2} for $2\le j\le 2k+1$ and $k$ fixed: curves in shades of gray (or black) correspond to even $j$, while curves in shades of red (or orange) correspond to odd $j$ indices. Based on this figure, one can make the following observations. On the one hand, for each fixed and even $j$, $P_{j,k}$ is strictly increasing in $y$; however, for any fixed $y\in(0,1)$, $P_{j,k}$ is in general not monotone in its even index $j$. On the other hand, for each fixed and odd $j$, $P_{j,k}$ is in general not monotone in $y$; however, for any fixed $y\in(0,1)$, $P_{j,k}$ is strictly decreasing in its odd index $j$.}\label{fig_excepttopleft}
\end{center}
\end{figure}
The following lemma proves some of the observations about the polynomials $P_{j,k}$ suggested by
Figure~\ref{fig_excepttopleft} for even and odd indices $2\le j\le 2k+1$.
\begin{lemma}\label{lem2} Let us fix $y\in(0,1)$ arbitrarily. Then\\
\indent $\bullet$ for any $1\le\ell\le k$, $P_{2\ell,k}(y)>0$;\\
\indent $\bullet$ for any $2\le\ell\le k$, $P_{2\ell+1,k}(y)<P_{2\ell-1,k}(y)$. 
\end{lemma}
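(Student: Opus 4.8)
The plan is to substitute the two index parities directly into the definition \eqref{Pjky} and then to exploit one elementary fact: for $y\in(0,1)$ the map $n\mapsto y^n$ is strictly decreasing, so $y^a>y^b$ whenever $0\le a<b$. Each of the two claims will reduce to the observation that the relevant expression contains exactly one negative monomial, which is dominated by a positive monomial of strictly smaller exponent, while every remaining monomial is positive.

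First I would record the even-index form. For $j=2\ell$ the sign factors are $(-1)^{j-1}=-1$ and $(-1)^{j}=+1$, so
\[
P_{2\ell,k}(y)=-y^{4k+2-2\ell}+y^{2k-1+2\ell}+y^{2k+1-2\ell}+y^{2\ell-2}.
\]
The unique negative term is $-y^{4k+2-2\ell}$; comparing its exponent with that of $y^{2\ell-2}$, the inequality $2\ell-2<4k+2-2\ell$ is equivalent to $\ell<k+1$, which holds throughout $1\le\ell\le k$. Hence $y^{2\ell-2}>y^{4k+2-2\ell}$ on $(0,1)$, and since the two remaining terms are strictly positive we obtain $P_{2\ell,k}(y)>0$, which is the first bullet.

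For the second bullet I would form the difference of consecutive odd-index polynomials. Writing out $P_{2\ell-1,k}$ and $P_{2\ell+1,k}$ from \eqref{Pjky} and pairing the four matching terms, a common factor $(1-y^2)>0$ can be extracted, leaving
\[
P_{2\ell-1,k}(y)-P_{2\ell+1,k}(y)=(1-y^2)\bigl(-y^{4k+1-2\ell}+y^{2k+2\ell-2}+y^{2k-2\ell}+y^{2\ell-3}\bigr).
\]
Again there is exactly one negative monomial, $-y^{4k+1-2\ell}$, and the exponent comparison $2\ell-3<4k+1-2\ell$ is once more equivalent to $\ell\le k$. Thus $y^{2\ell-3}$ dominates $y^{4k+1-2\ell}$ on $(0,1)$, the parenthesised bracket is strictly positive, and the whole difference is positive, which is precisely $P_{2\ell+1,k}(y)<P_{2\ell-1,k}(y)$.

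There is no genuinely hard step here; the argument is careful bookkeeping rather than deep analysis. The points demanding attention are the correct handling of the alternating signs $(-1)^{j-1}$ and $(-1)^{j}$ under the two parities---a sign is easy to misplace when shifting $\ell\mapsto\ell-1$ to obtain $P_{2\ell-1,k}$---and the check that every exponent occurring is non-negative on the admissible range $2\le j\le 2k+1$, so that we really are comparing genuine monomials in $y$ and the monotonicity argument applies. Both exponent inequalities turn out to be strict precisely because $\ell\le k$ (each would degenerate to an equality at $\ell=k+1$), which explains why the ranges of $\ell$ stated in the lemma are exactly the correct ones.
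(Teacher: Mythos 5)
Your proof is correct and follows essentially the same route as the paper: substitute the two parities into \eqref{Pjky}, factor $(1-y^2)$ out of the odd-index difference, and absorb the single negative monomial into a positive monomial of strictly smaller exponent (the paper pairs it with $y^{2k+1-2\ell}$, resp.\ $y^{2k-2\ell}$, to form a factor $1-y^{2k+1}$, whereas you pair it with $y^{2\ell-2}$, resp.\ $y^{2\ell-3}$ --- an immaterial difference). All sign and exponent checks in your argument are accurate.
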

\begin{proof} For the even indices, we have
\[
P_{2\ell,k}(y)=y^{2 k-2 l+1}(1-y^{2 k+1})+y^{2 k+2 l-1}+y^{2 l-2}>0,\]
while for the odd indices,
\[
P_{2\ell+1,k}(y)-P_{2\ell-1,k}(y)=-(1 - y^2)\Big(y^{2 k+2 l-2}+y^{2 l-3}+y^{2 k-2 l}(1-y^{2k+1})\Big)<0.
\] 
\end{proof}
By combining Corollary \ref{cor2} and Lemma \ref{lem2}, we have obtained the following result, expressing the fact that the non-negativity of $M(2k+1,\te,\nu)$ is determined only by the polynomials appearing in the numerators of its top left and top right entries.
\begin{corollary}\label{cor3} For a given pair $(\te,\nu)$
\[
M_{1,1}(2k+1,\te,\nu)\ge 0 \quad \Longleftrightarrow \quad \Pol\ge 0 \quad \text{(see } \eqref{poldef}\text{)},
\]
and
\[
M_{1,j}(2k+1,\te,\nu)\ge 0 \text{  for each } 2\le j\le 2k+1 \quad \Longleftrightarrow \quad
\Por\ge 0\quad\text{(see }\eqref{pordef}\text{)}.
\]
\end{corollary}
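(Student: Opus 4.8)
The plan is to combine the two results immediately preceding the statement: Corollary~\ref{cor2}, which translates each scalar condition $M_{1,j}\ge 0$ into the sign of a single polynomial in $y$, and Lemma~\ref{lem2}, which orders these polynomials by parity and magnitude. The first equivalence in Corollary~\ref{cor3} requires no argument at all—it is verbatim the $j=1$ case of Corollary~\ref{cor2}, since the leftmost entry $M_{1,1}$ is governed precisely by $\Pol$. So the only real content is the second equivalence, and even there the analytic work has already been done in Lemma~\ref{lem2}; what remains is a short combinatorial observation.

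For the second equivalence I would first invoke Corollary~\ref{cor2} to restate ``$M_{1,j}\ge 0$ for each $2\le j\le 2k+1$'' as ``$P_{j,k}(y)\ge 0$ for each $2\le j\le 2k+1$''. The forward implication is then trivial, because $\Por=P_{2k+1,k}(y)$ is one of the listed polynomials. For the reverse implication I would split the indices by parity. By the first bullet of Lemma~\ref{lem2}, every even-indexed polynomial $P_{2\ell,k}(y)$ is strictly positive for $y\in(0,1)$, so these entries impose no constraint whatsoever. By the second bullet, the odd-indexed polynomials satisfy $P_{2\ell+1,k}(y)<P_{2\ell-1,k}(y)$ for $2\le\ell\le k$, which chains together into the strictly decreasing sequence $P_{3,k}(y)>P_{5,k}(y)>\cdots>P_{2k+1,k}(y)$. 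Its minimum is the rightmost term $\Por=P_{2k+1,k}(y)$, so the single hypothesis $\Por\ge 0$ forces every odd-indexed $P_{j,k}(y)$ to be non-negative as well. Together with the unconditional positivity of the even-indexed ones, this yields $P_{j,k}(y)\ge 0$ for all $2\le j\le 2k+1$, and hence $M_{1,j}\ge 0$ throughout that range.

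I do not expect any genuine obstacle here: the whole point of Lemma~\ref{lem2} was to guarantee that, among the entries $M_{1,2},\ldots,M_{1,m}$, the rightmost one is the ``worst''—the most likely to be negative—so that the full row of non-negativity conditions collapses to a single scalar inequality on $\Por$. The only minor bookkeeping is to check that the odd indices actually appearing are $j=3,5,\ldots,2k+1$ (i.e.\ $\ell=1,\ldots,k$), so that the decreasing chain terminates exactly at $\Por$; the degenerate case $k=1$, where the odd chain has a single element, is consistent since the second bullet of Lemma~\ref{lem2} is then vacuous.
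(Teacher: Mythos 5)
Your proposal is correct and matches the paper exactly: the paper presents Corollary~\ref{cor3} as an immediate consequence of combining Corollary~\ref{cor2} with Lemma~\ref{lem2}, with the even-indexed polynomials unconditionally positive and the odd-indexed ones forming a decreasing chain terminating in $\Por$. Your bookkeeping of the index ranges and the degenerate case $k=1$ is also accurate.
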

The non-negativity of $M(2k+1,\te,\nu)$ has therefore been reduced to studying the simultaneous non-negativity of two parametric polynomials, $P_{L,k,\te}$ and $P_{R,k}$, over the $y$-interval $(0,1)$. The content of Lemmas \ref{lem3} and \ref{lem4} is illustrated by Figure~\ref{fig_someplpr}.
\begin{lemma}[about the sign of $\Por$]\label{lem3}
Let us fix $k$ arbitrarily, and recall that by definition $\Por=y^{4 k}+y^{2 k+1}+y^{2 k-1}-1$. Then there is a unique $y\in(0,1)$ such that $\Por=0$.\\ 
Let 
\begin{equation}\label{yrdef}\yr \text{ denote this root.}\end{equation} 
Then $\Por<0$ for $y\in(0,\yr)$, and $\Por>0$ for $y\in(\yr,1)$.\\
Moreover, $\yr<y_R(k+1)$, $\lim_{k\to+\infty} \yr=1$, and
\begin{equation}\label{yrasympt}
\left(\sqrt{2}-1\right)^{\frac{1}{2 k-1}}<\yr < \left(\sqrt{2}-1\right)^{\frac{1}{2 k+1}}.
\end{equation}
\end{lemma}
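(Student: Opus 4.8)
The plan is to analyze the polynomial $\Por = y^{4k} + y^{2k+1} + y^{2k-1} - 1$ on the interval $(0,1)$ by establishing the following chain: existence and uniqueness of the root, the sign pattern around it, monotonicity of the root in $k$, the limit, and finally the two-sided asymptotic bounds. First I would prove existence and uniqueness of the root in $(0,1)$. For existence, evaluate at the endpoints: $\Por\big|_{y=0} = -1 < 0$ and $\Por\big|_{y=1} = 1 + 1 + 1 - 1 = 2 > 0$, so by the intermediate value theorem a root exists in $(0,1)$. For uniqueness, I would compute $\frac{d}{dy}\Por = 4k\,y^{4k-1} + (2k+1)y^{2k} + (2k-1)y^{2k-2}$, which is strictly positive for $y\in(0,1)$ (indeed for all $y>0$) since every term is positive. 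Hence $\Por$ is strictly increasing on $(0,1)$, which immediately gives uniqueness of the root $\yr$ and the claimed sign pattern: $\Por < 0$ on $(0,\yr)$ and $\Por > 0$ on $(\yr,1)$.

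Next I would establish the monotonicity $\yr < y_R(k+1)$. The cleanest route is to show that at the point $y = \yr$ the next polynomial $P_{R,k+1}(\yr)$ is strictly negative; since $P_{R,k+1}$ is also strictly increasing with its unique root at $y_R(k+1)$, negativity at $\yr$ forces $\yr < y_R(k+1)$. Concretely, using $\Por = 0$, i.e.\ $y^{4k} + y^{2k+1} + y^{2k-1} = 1$ evaluated at $y=\yr$, I would substitute into $P_{R,k+1}(y) = y^{4k+4} + y^{2k+3} + y^{2k+1} - 1$ and show the resulting expression is negative for $y\in(0,1)$; this amounts to a comparison of the two relations after factoring out appropriate powers of $y$, exploiting that $0 < \yr < 1$ makes higher powers smaller. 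The limit $\lim_{k\to+\infty}\yr = 1$ then follows from the monotone increasing, bounded-above-by-$1$ sequence together with the lower asymptotic bound below (which already forces the limit to be $1$), so it need not be argued independently once the bounds are in hand.

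The core of the work, and the main obstacle, is the two-sided bound \eqref{yrasympt}. The guiding idea is that for $y\in(0,1)$ and $k$ large, the dominant balance in $\Por = 0$ is between the $\pm 1$ and the middle term $y^{2k-1}$ (the term $y^{2k+1}$ and $y^{4k}$ being lower order), suggesting $y^{2k-1}\approx 1 - (\text{small})$ and hence $\yr$ close to a root of $y^{2k-1} \approx$ constant. More precisely, to get the bounds I would evaluate $P_{R,k}$ at the two candidate endpoints. For the upper bound, set $y = (\sqrt{2}-1)^{1/(2k+1)}$, so that $y^{2k+1} = \sqrt{2}-1$, and show $\Por \big|_{y} > 0$, which by the increasing property and the sign pattern forces $\yr < (\sqrt{2}-1)^{1/(2k+1)}$. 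For the lower bound, set $y = (\sqrt{2}-1)^{1/(2k-1)}$, so that $y^{2k-1} = \sqrt{2}-1$, and show $\Por\big|_{y} < 0$, forcing $\yr > (\sqrt{2}-1)^{1/(2k-1)}$. The choice $\sqrt{2}-1$ is natural because $(\sqrt{2}-1)$ satisfies $t^2 + 2t - 1 = 0$, i.e.\ $(1+t)^2 = 2$, which is exactly the relation that makes the two leading pieces of $\Por$ cancel cleanly when $y^{2k\mp 1} = t$.

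The delicate part of \eqref{yrasympt} will be verifying the two sign inequalities $\Por > 0$ and $\Por < 0$ at the respective substitution points for \emph{all} $k\ge 1$, not just asymptotically. At each endpoint the substitution pins down one power of $y$ to the constant $\sqrt{2}-1$, but the remaining three terms become powers of $(\sqrt{2}-1)^{1/(2k\pm1)}$ with exponents that are not integer multiples of the pinned exponent, so the expressions do not collapse to a single algebraic identity and one is left with an inequality in the single real quantity $t_k := (\sqrt{2}-1)^{1/(2k\pm 1)}\in(0,1)$. I expect to reduce each case to showing that an explicit one-variable function of $t_k$ keeps a fixed sign on the relevant subinterval of $(0,1)$; this can be done by bounding the subdominant terms (for instance using $t_k < 1$ to bound the higher powers, and monotonicity in $k$ to handle the tail), though care is needed because the inequalities are not uniform and the margin shrinks as $k\to\infty$. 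This endpoint sign analysis — turning the heuristic dominant-balance argument into rigorous inequalities valid for every $k$ — is where the real effort lies.
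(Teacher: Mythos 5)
Your treatment of existence, uniqueness, the sign pattern, the monotonicity $\yr<y_R(k+1)$, and the limit coincides with the paper's proof (the paper phrases the monotonicity as: $k\mapsto\Por$ is strictly decreasing for fixed $y\in(0,1)$, which is exactly your termwise comparison of powers). The one place where your proposal is not yet a proof is the two-sided bound \eqref{yrasympt}: you correctly identify the test points $y=(\sqrt{2}-1)^{1/(2k\pm1)}$ and the right strategy (evaluate $\Por$ there and read off the sign), but you stop short of the sign verification and predict that it requires delicate, non-uniform estimates of subdominant terms whose ``margin shrinks as $k\to\infty$.'' That prediction is off the mark, and the missing observation is the multiplicative identity
\[
y^{4k}+y^{2k+1}+y^{2k-1}+1=\bigl(y^{2k-1}+1\bigr)\bigl(y^{2k+1}+1\bigr),
\]
so that $\Por=0$ is equivalent to $\bigl(y^{2k-1}+1\bigr)\bigl(y^{2k+1}+1\bigr)=2$. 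The paper then sandwiches, for $y\in(0,1)$,
\[
\bigl(y^{2k+1}+1\bigr)^2<\bigl(y^{2k-1}+1\bigr)\bigl(y^{2k+1}+1\bigr)<\bigl(y^{2k-1}+1\bigr)^2,
\]
so at the root $y^{2k+1}+1<\sqrt{2}<y^{2k-1}+1$, which is exactly \eqref{yrasympt}, and the limit drops out as well.

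Equivalently, your own endpoint plan closes in one line once you use $y^{4k}=y^{2k+1}\cdot y^{2k-1}$: with $t:=\sqrt{2}-1$ (so $t+1=\sqrt{2}$ and $t-1=-\sqrt{2}\,t$), at the point where $y^{2k+1}=t$ one gets $\Por=(t+1)\,y^{2k-1}+(t-1)=\sqrt{2}\,\bigl(y^{2k-1}-t\bigr)>0$ because $y^{2k-1}>y^{2k+1}=t$ for $y\in(0,1)$; and at the point where $y^{2k-1}=t$ one gets $\Por=\sqrt{2}\,\bigl(y^{2k+1}-t\bigr)<0$. So the expressions do collapse to exact algebraic identities, contrary to your expectation. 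The gap is therefore a single missing identity rather than a flawed strategy; but as written, the decisive step of the lemma remains unproven in your proposal.
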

\begin{proof}
For fixed $k$, the continuous function $y\mapsto\Por=y^{4 k}+y^{2 k+1}+y^{2 k-1}-1$ is strictly increasing, 
$P_{R,k}(0)<0$ and $P_{R,k}(1)>0$, hence there is a unique root. This root is strictly increasing in $k$, because the function $k\mapsto\Por$ is strictly decreasing for fixed $y\in(0,1)$. Finally notice that 
$y^{4 k}+y^{2 k+1}+y^{2 k-1}-1=0$ is equivalent to $\left(y^{2 k-1}+1\right) \left(y^{2 k+1}+1\right)=2$,
and for any $y\in(0,1)$ one has
\[
\left(y^{2 k+1}+1\right)^2<\left(y^{2 k-1}+1\right) \left(y^{2 k+1}+1\right)<\left(y^{2 k-1}+1\right)^2.
\]
From this we easily get \eqref{yrasympt} and also the limit of $\yr$ ($k\to +\infty$).
\end{proof}
\begin{remark}
The asymptotic series (as $k\to+\infty$) of both bounds in \eqref{yrasympt} has the form
\[
1+\frac{\ln \left(\sqrt{2}-1\right)}{2 k}+{\cal{O}}\left(\frac{1}{k^2}\right)\approx 
1-\frac{0.44069}{k}+{\cal{O}}\left(\frac{1}{k^2}\right).
\]
\end{remark}

\begin{lemma}[about the sign of $\Pol$]\label{lem4} Let us fix $k$ arbitrarily, and recall that by definition 
$\Pol=-\theta  y^{4 k+4}-(\theta -2) y^{4 k+2}+(\theta -2) y^2+\theta$.\\
(i) Suppose that $\frac{2k}{2k+1}\le\te\le1$. Then, for any $y\in(0,1)$, $\Pol>0$.\\
(ii) Suppose now that $0<\te<\frac{2k}{2k+1}$. Then there is a unique $y\in(0,1)$ such that $\Pol=0$.\\ 
Let 
\begin{equation}\label{yldef}\yl \text{ denote this root.}\end{equation} 
Then $\Pol>0$ for $y\in(0,\yl)$, and $\Pol<0$ for $y\in(\yl,1)$.\\
Moreover, on the one hand, for fixed $0<\te<\frac{2k}{2k+1}$, the function $k\mapsto\yl$ is strictly decreasing, and $\lim_{k\to+\infty} \yl=\sqrt{\frac{\te}{2-\te}}\in(0,1)$.\\
On the other hand, for fixed $k\in\nplus$, the function $\left(0,\frac{2k}{2k+1}\right)\ni\te\mapsto\yl$ is strictly increasing, and we have the one-sided limits $\lim_{\te\to0+0} \yl=0$ and $\lim_{\te\to\frac{2k}{2k+1}-0} \yl=1$.
\end{lemma}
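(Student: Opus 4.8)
The plan is to reduce the entire lemma to the monotonicity of a single auxiliary function. Since $\Pol$ is even in $y$, I first set $z:=y^2\in(0,1)$ and rewrite $\Pol$ as the polynomial $S(z):=\theta-(2-\theta)z+(2-\theta)z^{2k+1}-\theta z^{2k+2}$ in $z$. One checks $S(1)=0$, so $S$ factors as $S(z)=(1-z)\tilde{Q}(z)$ with $\tilde{Q}(z):=\theta(1+z^{2k+1})-2(1-\theta)\sum_{i=1}^{2k}z^i$; since $1-z>0$ on $(0,1)$, the sign of $\Pol$ there equals the sign of $\tilde{Q}$. For $\theta=1$ we have simply $\tilde{Q}=1+z^{2k+1}>0$ (this settles part (i) at the right endpoint), while for $\theta\in(0,1)$ I factor once more, $\tilde{Q}(z)=(1+z^{2k+1})\big(\theta-2(1-\theta)G_k(z)\big)$, where $G_k(z):=\frac{\sum_{i=1}^{2k}z^i}{1+z^{2k+1}}$. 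Writing $c:=\frac{\theta}{2(1-\theta)}$, on $(0,1)$ we then have $\Pol>0\Leftrightarrow G_k(z)<c$ and $\Pol<0\Leftrightarrow G_k(z)>c$, so the whole lemma becomes a statement about the level sets of $G_k$.

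The crux is to show that $G_k\colon(0,1)\to(0,k)$ is a strictly increasing bijection. Its boundary values $G_k(0^+)=0$ and $G_k(1^-)=\frac{2k}{2}=k$ are immediate. For strict monotonicity I compute $G_k'$; with $N:=\sum_{i=1}^{2k}z^i$ and $D:=1+z^{2k+1}$ the sign of $G_k'$ is the sign of $N'D-ND'$, and a direct expansion followed by reindexing collapses this into $N'D-ND'=\sum_{i=1}^{2k} i\,(z^{i-1}-z^{4k+1-i})$, a sum of strictly positive terms on $(0,1)$ because $i-1<4k+1-i$ for every $1\le i\le 2k$. Once $G_k$ is known to be an increasing bijection onto $(0,k)$, both main cases fall out: part (i) holds because $\theta\ge\frac{2k}{2k+1}$ is exactly $c\ge k>G_k(z)$ on $(0,1)$, forcing $\Pol>0$; and part (ii) holds because $\theta\in\left(0,\frac{2k}{2k+1}\right)$ is exactly $c\in(0,k)$, so $G_k(z)=c$ has a unique solution $z^\ast$, producing the unique root $\yl=\sqrt{z^\ast}$ together with the claimed sign pattern ($\Pol>0$ before it, $\Pol<0$ after).

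It then remains to track how $z^\ast$, hence $\yl$, depends on the parameters. For the $\theta$-dependence with $k$ fixed: $c(\theta)=\frac{\theta}{2(1-\theta)}$ is strictly increasing on $(0,1)$, and since $z^\ast=G_k^{-1}(c)$ with $G_k^{-1}$ increasing, $\theta\mapsto z^\ast$ is strictly increasing; the one-sided limits follow from $G_k^{-1}(0)=0$ (as $\theta\to0+$, $c\to0$) and $G_k^{-1}(k)=1$ (as $\theta\to\frac{2k}{2k+1}-$, $c\to k$). For the $k$-dependence with $\theta$ fixed (and $k$ large enough that $c<k$), I show $G_k$ is pointwise strictly increasing in $k$: clearing denominators, the numerator of $G_{k+1}-G_k$ simplifies to $z^{2k+1}(1+z)^2>0$. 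Then $G_{k+1}(z^\ast_k)>G_k(z^\ast_k)=c=G_{k+1}(z^\ast_{k+1})$ with $G_{k+1}$ increasing gives $z^\ast_{k+1}<z^\ast_k$, so $k\mapsto\yl$ is strictly decreasing. Finally, the pointwise limit $G_\infty(z)=\frac{z}{1-z}$ has level set $G_\infty(z)=c$ at $z=\frac{c}{1+c}=\frac{\theta}{2-\theta}$, and a short squeeze (using $G_k<G_\infty$, whence $z^\ast_k>\frac{\theta}{2-\theta}$, together with $G_k(z_0)\to G_\infty(z_0)>c$ for any fixed $z_0>\frac{\theta}{2-\theta}$) upgrades the monotone limit of $z^\ast_k$ to exactly $\frac{\theta}{2-\theta}$, yielding $\lim_{k\to\infty}\yl=\sqrt{\frac{\theta}{2-\theta}}$.

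The main obstacle is the strict monotonicity of $G_k$ in $z$: this is the only place where the specific structure of the sparse polynomial $\Pol$ is genuinely used, and it is what makes the level-set description unambiguous. Everything else is either an endpoint evaluation or a comparison between two increasing functions. Fortunately the derivative's numerator telescopes into the manifestly positive sum $\sum_{i} i\,(z^{i-1}-z^{4k+1-i})$, so the obstacle dissolves into a trivial exponent comparison. A secondary technical point is justifying the $k\to\infty$ limit rigorously—interchanging the root-taking with the limit of $G_k$—which the squeeze argument above handles cleanly.
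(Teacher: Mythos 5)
Your proof is correct, and while it shares the paper's overall strategy---exploiting the linearity of $\Pol$ in $\theta$ to reduce everything to the level sets of a single threshold function that is strictly increasing in $y$---your execution is genuinely different and in one respect cleaner. The paper solves $\Pol=0$ for $\theta$ directly, obtaining $\Theta(y,k)=\frac{2y^2(1-y^{4k})}{(1+y^2)(1-y^{4k+2})}$, and proves its strict monotonicity in $y$ by a cascade of three derivative computations (showing $\partial_y\Theta>0$ reduces to a polynomial $\widetilde{P}>0$, which reduces to $\widetilde{Q}>0$, which is checked by yet another derivative). Your substitution $z=y^2$ followed by the explicit factorization $(1-z)(1+z^{2k+1})\bigl(\theta-2(1-\theta)G_k(z)\bigr)$ with $G_k(z)=\frac{\sum_{i=1}^{2k}z^i}{1+z^{2k+1}}$ replaces that cascade with the single reindexed identity $N'D-ND'=\sum_{i=1}^{2k}i\,(z^{i-1}-z^{4k+1-i})$, which is manifestly positive---a more elementary and self-contained argument for the crucial monotonicity. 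The two threshold functions are in fact the same object in disguise: one checks that $\frac{2G_k(z)}{1+2G_k(z)}=\Theta(y,k)$ under $z=y^2$, and your numerator $z^{2k+1}(1+z)^2$ for $G_{k+1}-G_k$ is the counterpart of the paper's $\frac{2(1-y^2)^2y^{4k+2}}{(1-y^{4k+2})(1-y^{4k+6})}$ for $\Theta(y,k+1)-\Theta(y,k)$. The parameter-dependence and limit arguments (monotone comparison of increasing functions, plus the squeeze for $k\to+\infty$) then proceed essentially as in the paper; your handling of $\theta=1$ separately, where $c=\frac{\theta}{2(1-\theta)}$ is undefined, is appropriately noted.
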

\begin{proof}
We notice that the expression $\Pol$ is linear in $\te$, so by setting 
\[
\Theta(y,k)\coloneqq\frac{2 y^2 \left(1-y^{4 k}\right)}{\left(1+y^2\right) \left(1-y^{4 k+2}\right)},
\]
we easily get for any $y\in(0,1)$ that
\begin{equation}\label{lem4equi}
\Pol \lesseqqgtr 0\quad \Longleftrightarrow \quad \te\lesseqqgtr\Theta(y,k),
\end{equation}
where the symbol $\lesseqqgtr$ denotes either $<$, or $=$, or $>$ on both sides of the equivalence. It is
seen that for fixed $k$ we have the one-sided limits
\begin{equation}\label{lem4lim}
\lim_{y\to 0+0}\Theta(y,k)=0\quad\text{and}\quad\lim_{y\to 1-0}\Theta(y,k)=\frac{2k}{2k+1}.
\end{equation}
Now we show that the function
\begin{equation}\label{Thetaincr}
(0,1)\ni y\mapsto \Theta(y,k)\quad\text{is strictly increasing.}
\end{equation}
The partial derivative
\[
\partial_y\Theta(y,k)=\frac{4 y \left(1-(2 k+1) y^{4 k}+(2 k+1) y^{4 k+4}-y^{8 k+4}\right)}{\left(1+y^2\right)^2 \left(1-y^{4 k+2}\right)^2}
\]
is positive, if $\widetilde{P}(y,k):=1-(2 k+1) y^{4 k}+(2 k+1) y^{4 k+4}-y^{8 k+4}>0$. But 
\[
\widetilde{P}(0,k)=1\quad\text{and}\quad\widetilde{P}(1,k)=0,
\]
so the positivity of $\widetilde{P}(y,k)$ will follow if we show that $y\mapsto\widetilde{P}(y,k)$ is strictly decreasing. Indeed,
\[
\partial_y\widetilde{P}(y,k)=-4 (2 k+1) y^{4 k-1} \widetilde{Q}(y,k),
\]
where
\[
\widetilde{Q}(y,k):=y^{4 k+4}-(k+1) y^4+k,
\]
hence it is enough to verify $\widetilde{Q}(y,k)>0$. And this is true, since $\widetilde{Q}(0,k)=k$, $\widetilde{Q}(1,k)=0$ and
\[
\partial_y \widetilde{Q}(y,k)=-4 (k+1) y^3 \left(1-y^{4 k}\right)<0.
\]
Now, as \eqref{Thetaincr} has been checked, it is obvious that continuity, \eqref{lem4equi}, \eqref{lem4lim} and \eqref{Thetaincr} imply statement (\textit{i}) of the lemma, and, at the same time, regarding statement (\textit{ii}) of the lemma, the existence of a unique root $\yl\in(0,1)$, the positivity of $P_{L,k,\te}$ on $(0,\yl)$, and the negativity of $P_{L,k,\te}$ on $(\yl,1)$.

We finally discuss the monotonicity and limit properties of the root $\yl$. For fixed $y\in(0,1)$, the function $k\mapsto\Theta(y,k)$ is strictly increasing, since
\[
\Theta(y,k+1)-\Theta(y,k)=\frac{2 \left(1-y^2\right)^2 y^{4 k+2}}{\left(1-y^{4 k+2}\right) \left(1-y^{4
   k+6}\right)}>0.
\]
This implies that, for any fixed $\te\in\left(0,\frac{2k}{2k+1}\right)$, the function $k\mapsto\yl$ is strictly decreasing. Moreover, for fixed $y\in(0,1)$, we see from the definition that $\lim_{k\to+\infty} \Theta(y,k)=\frac{2 y^2}{1+y^2}$,
so, due to \eqref{lem4equi} with ``equality'', one has for fixed $\te\in\left(0,\frac{2k}{2k+1}\right)$ that $y_{\infty}(\te)\coloneqq\lim_{k\to+\infty} \yl$ solves $\te=\frac{2 y_{\infty}(\te)^2}{1+y_{\infty}(\te)^2}$; in other words, $y_{\infty}(\te)=\sqrt{\frac{\te}{2-\te}}\in(0,1)$. To show the validity of the last sentence of the lemma, we fix $k\in\nplus$, and simply take into account again \eqref{lem4equi} with ``equality'', \eqref{lem4lim} and \eqref{Thetaincr}.
\end{proof}
\begin{figure}
\begin{center}
\includegraphics[width=0.5\textwidth]{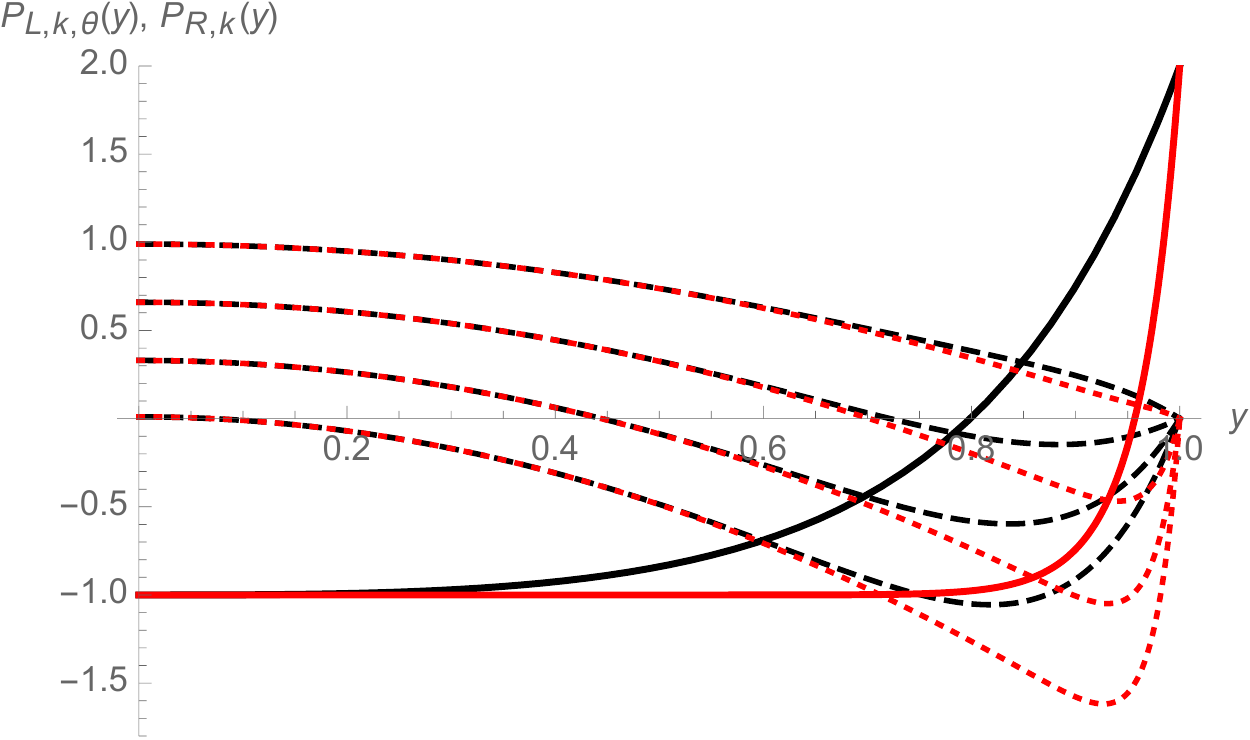}
\caption{The two solid curves show the functions $y\mapsto\Por$ for some $k=k_0$ (solid black) and 
 $k=k_1$ (solid red) with $k_0<k_1$.  The dashed black curves show the functions $y\mapsto\Pol$ for $k=k_0$ and for various values of $\te\in(0,1]$. Finally, the dotted red curves show the functions $y\mapsto\Pol$ for $k=k_1$ and for the same values of $\te\in(0,1]$.}\label{fig_someplpr}
\end{center}
\end{figure}

In order to return to the original variables $(\te,\nu)$ from the variable $y$---based on \eqref{fromytonu},  \eqref{yrdef} and \eqref{yldef}---we define
\begin{equation}\label{nurdef}
\nur:=\frac{2\yr}{1-\yr^2}\cdot \frac{1}{\te},
\end{equation}
and similarly,
\begin{equation}\label{nuldef}
\nul:=\begin{cases}
 \frac{2\yl}{1-\yl^2}\cdot \frac{1}{\te} & \text{for } 0<\te<\frac{2k}{2k+1}\\
 +\infty & \text{for } \frac{2k}{2k+1}\le \te\le 1.
\end{cases}
\end{equation}
The value $+\infty$ is introduced here for convenience so as to make our descriptions shorter.

A reformulation of Corollary \ref{cor3} in terms of the variables $(\te,\nu)$ is given below.
\begin{corollary}\label{cor4}
For any $k\in\nplus$ and $\te\in(0,1]$ we have 
\[
M_{1,1}(2k+1,\te,\nu)\ge 0 \quad \Longleftrightarrow \quad \nu\le\nul,
\] 
and
\[
M_{1,j}(2k+1,\te,\nu)\ge 0 \text{  for each } 2\le j\le 2k+1 \quad \Longleftrightarrow \quad \nu\ge\nur.
\] 
In particular, 
\[
M_{1,j}(2k+1,\te,\nu)\ge 0 \text{  for each } 1\le j\le 2k+1  \quad \Longleftrightarrow \quad \nur\le\nu\le\nul.
\] 
\end{corollary}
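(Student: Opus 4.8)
The plan is to read Corollary~\ref{cor4} as a change of variables applied to Corollary~\ref{cor3}: the threshold statements about the root variable $y\in(0,1)$ supplied by Lemmas~\ref{lem3} and~\ref{lem4} are to be transported into threshold statements about the CFL number $\nu>0$. The single structural fact that makes this transport faithful is that, for each fixed $\te\in(0,1]$, the correspondence $\nu\leftrightarrow y$ is a strictly increasing bijection between $(0,+\infty)$ and $(0,1)$. Once this is in hand, every inequality of the form $y\le\yl$ or $y\ge\yr$ turns verbatim into $\nu\le\nul$ or $\nu\ge\nur$, because by construction $\nul$ and $\nur$ are exactly the $\nu$-images of $\yl$ and $\yr$ under \eqref{fromytonu}.

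First I would establish the monotonicity. By \eqref{fromytonu}, for fixed $\te>0$ the map $y\mapsto\nu=\frac{2y}{1-y^2}\cdot\frac{1}{\te}$ is a strictly increasing bijection of $(0,1)$ onto $(0,+\infty)$, since $y\mapsto\frac{2y}{1-y^2}$ is strictly increasing with range $(0,+\infty)$. Hence its inverse, which is precisely the relation \eqref{ydef01}, is a strictly increasing bijection $\nu\mapsto y$ from $(0,+\infty)$ onto $(0,1)$. Comparing with \eqref{nurdef} and the finite branch of \eqref{nuldef} then shows that $\nur$ and $\nul$ are the $\nu$-values corresponding to $\yr$ and $\yl$, respectively.

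Next I would handle the first equivalence by splitting into the two regimes of Lemma~\ref{lem4}. When $\frac{2k}{2k+1}\le\te\le 1$, part (i) gives $\Pol>0$ throughout $(0,1)$, so Corollary~\ref{cor3} yields $M_{1,1}>0$ for \emph{every} $\nu>0$, which is exactly the statement $\nu\le\nul$ under the convention $\nul=+\infty$ of \eqref{nuldef}. When $0<\te<\frac{2k}{2k+1}$, part (ii) gives $\Pol\ge 0\iff y\le\yl$, and the increasing bijection turns this into $\nu\le\nul$; combined with Corollary~\ref{cor3} this proves $M_{1,1}\ge 0\iff\nu\le\nul$. The second equivalence is shorter still: Lemma~\ref{lem3} gives $\Por\ge 0\iff y\ge\yr$ for all $k$ and all $\te$, and the same bijection sends this to $\nu\ge\nur$, which by Corollary~\ref{cor3} is the non-negativity of $M_{1,j}$ for every $2\le j\le 2k+1$. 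The final ``in particular'' clause is then the conjunction of the two equivalences.

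I do not expect a serious obstacle: the proof is a transport of results already in hand, and the only points needing attention are routine bookkeeping. One must track the endpoints, where the numerator polynomials vanish---at $y=\yl$ one has $M_{1,1}=0$, and at $y=\yr$ the rightmost entry $M_{1,2k+1}$ equals $0$---so that the non-strict inequalities $\nu\le\nul$ and $\nu\ge\nur$ correctly include their boundary values; and one must keep the degenerate regime $\frac{2k}{2k+1}\le\te\le 1$, where $\nul=+\infty$ renders the upper bound vacuous, separate from the generic regime throughout.
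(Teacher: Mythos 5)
Your proposal is correct and follows essentially the same route as the paper's own proof: both arguments transport the $y$-threshold statements of Lemmas~\ref{lem3} and~\ref{lem4} into $\nu$-threshold statements via the strictly increasing bijection $y\mapsto\frac{2y}{1-y^2}$ of \eqref{fromytonu}, and then invoke Corollary~\ref{cor3}, with the convention $\nul=+\infty$ handling the regime $\frac{2k}{2k+1}\le\te\le 1$. Your write-up merely spells out the case split and endpoint bookkeeping a little more explicitly than the paper does.
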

\begin{proof} By taking into account Corollary \ref{cor3}, Lemmas \ref{lem3} and \ref{lem4}, and the fact that the map in \eqref{fromytonu} 
\begin{equation}\label{cor4bijection}
(0,1)\ni y\mapsto\frac{2y}{1-y^2}\in(0,+\infty)\text{ is a strictly increasing bijection,}
\end{equation}
we get for fixed $k$ and $\te$ that $\Por\ge 0$ is equivalent to $\nu\ge\nur$, and
$\Pol\ge 0$ is equivalent to $\nu\le\nul$. In particular, due to the definition of $\nul$ in \eqref{nuldef}, this last inequality means that there is no upper bound on $\nu$ for $\frac{2k}{2k+1}\le \te\le 1$.
\end{proof}
Some growth rates, monotonicity and limit properties of $\nur$ and $\nul$---defined in 
\eqref{nurdef}--\eqref{nuldef}---are collected below; 
see also Figures~\ref{fig_variousk} and \ref{fig_boundary}.
\begin{corollary}\label{cor5} (i) For any $k\in\nplus$ and $\te\in(0,1]$, we have $\nur<\nu_R(k+1,\theta)$, and
\begin{equation}\label{cor5lowerupper}
\frac{2 \left(\sqrt{2}+1\right)^{\frac{1}{2 k-1}}}{\left(\sqrt{2}+1\right)^{\frac{2}{2
   k-1}}-1}\cdot\frac{1}{\te}<\nur<\frac{2 \left(\sqrt{2}-1\right)^{\frac{1}{2 k+1}}}{1-\left(\sqrt{2}-1\right)^{\frac{2}{2
   k+1}}}\cdot\frac{1}{\te}.
\end{equation}
The asymptotic series for these lower and upper bounds have the form
\[
\left( \frac{2}{\ln \left(\sqrt{2}+1\right)}k\mp\frac{1}{\ln \left(\sqrt{2}+1\right)}+ {\cal{O}}\left(\frac{1}{k}\right)\right)\cdot\frac{1}{\te},
\]
being approximately
$
\left( 2.26919 k\mp1.13459+ {\cal{O}}\left(\frac{1}{k}\right)\right)\cdot\frac{1}{\te}.
$
In particular, $\lim_{k\to+\infty} \nur=+\infty$.\\
(ii) For fixed $0<\te<\frac{2k}{2k+1}$, $\nul>\nu_L(k+1,\theta)$ (and $\nul =+\infty$ for $\frac{2k}{2k+1}\le\te\le 1$). Finally, for fixed $\te\in(0,1)$, we have the limit
\begin{equation}\label{cor5klim}
\lim_{k\to+\infty} \nul=\frac{1}{1-\theta }\sqrt{\frac{2-\theta }{\theta }},
\end{equation}
and, for fixed $k\in\nplus$, the one-sided limits  
\begin{equation}\label{cor5thetalim}
\lim_{\te\to 0+0} \nul=+\infty=\lim_{\te\to\frac{2k}{2k+1}-0} \nul.
\end{equation}
\end{corollary}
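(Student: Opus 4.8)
The plan is to transfer every statement about the $y$-roots $\yr$ and $\yl$ (established in Lemmas~\ref{lem3} and \ref{lem4}) to the CFL variables through the map $\phi(y):=\frac{2y}{1-y^2}$, using that, by \eqref{cor4bijection}, $\phi:(0,1)\to(0,+\infty)$ is a strictly increasing continuous bijection, and that by \eqref{nurdef}--\eqref{nuldef} one has $\nur=\frac1\te\,\phi(\yr)$ and $\nul=\frac1\te\,\phi(\yl)$ whenever $\yl$ exists. I will also record the elementary inequality $\phi(y)>2y$ for $y\in(0,1)$ (equivalent to $y^2>0$), which is needed at the very end.

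For part~(i), the monotonicity $\nur<\nu_R(k+1,\theta)$ is immediate: Lemma~\ref{lem3} gives $\yr<y_R(k+1)$, and applying the increasing map $\phi$ followed by the positive factor $\frac1\te$ preserves the inequality. The two-sided bound \eqref{cor5lowerupper} follows by applying $\phi$ to the sandwich \eqref{yrasympt}; the only bookkeeping is to rewrite the lower bound via $\sqrt2-1=(\sqrt2+1)^{-1}$, so that $\phi\!\left((\sqrt2-1)^{1/(2k-1)}\right)=\frac{2(\sqrt2+1)^{1/(2k-1)}}{(\sqrt2+1)^{2/(2k-1)}-1}$, which matches the stated form (and similarly for the upper bound, left in terms of $\sqrt2-1$). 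For the asymptotics I will observe that both bounds can be written as $\frac1\te\,\frac1{\sinh\epsilon}$ with $\epsilon=\frac{\ln(\sqrt2+1)}{2k\mp1}$, using $\frac{2c}{c^2-1}=\frac1{\sinh\epsilon}$ for $c=e^{\epsilon}$, then expand $\frac1{\sinh\epsilon}=\frac1\epsilon-\frac\epsilon6+\mathcal{O}(\epsilon^3)$ and substitute $\frac1\epsilon=\frac{2k\mp1}{\ln(\sqrt2+1)}$; this produces the claimed leading terms $\frac{2k}{\ln(\sqrt2+1)}\mp\frac{1}{\ln(\sqrt2+1)}+\mathcal{O}(1/k)$, and since the lower bound already diverges it yields $\lim_{k\to+\infty}\nur=+\infty$.

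For part~(ii), the inequality $\nul>\nu_L(k+1,\theta)$ follows because Lemma~\ref{lem4} shows $k\mapsto\yl$ is strictly decreasing (and $y_L(k+1,\theta)$ is defined, since $\theta<\frac{2k}{2k+1}<\frac{2k+2}{2k+3}$), so $\phi$ and the factor $\frac1\te$ carry $\yl>y_L(k+1,\theta)$ to the claim. For the limit \eqref{cor5klim} I use $\lim_{k\to+\infty}\yl=\sqrt{\frac{\te}{2-\te}}$ from Lemma~\ref{lem4} together with continuity of $\phi$; a short simplification of $\frac1\te\,\phi\!\left(\sqrt{\tfrac{\te}{2-\te}}\right)$, using $1-\frac{\te}{2-\te}=\frac{2(1-\te)}{2-\te}$ and $(2-\te)\sqrt{\tfrac{\te}{2-\te}}=\sqrt{\te(2-\te)}$, gives $\frac1{1-\te}\sqrt{\frac{2-\te}{\te}}$. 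The limit $\lim_{\te\to\frac{2k}{2k+1}-0}\nul=+\infty$ in \eqref{cor5thetalim} is easy: Lemma~\ref{lem4} gives $\yl\to1-$, so $\phi(\yl)\to+\infty$ while $\frac1\te$ stays bounded.

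The main obstacle is the remaining one-sided limit $\lim_{\te\to0+0}\nul=+\infty$, an indeterminate form $\frac1\te\,\phi(\yl)$ because $\yl\to0$ forces $\phi(\yl)\to0$ while $\frac1\te\to+\infty$. To resolve it I will use the defining relation $\te=\Theta(\yl,k)$ (from \eqref{lem4equi} with equality in the proof of Lemma~\ref{lem4}) together with the sharp bound $\Theta(y,k)<2y^2$ valid for all $y\in(0,1)$ and $k\in\nplus$; this bound is a one-line rearrangement, being equivalent to $y^{4k}(1-y^2)+y^2(1-y^{4k+2})>0$. It gives $\te<2\yl^2$, hence $\yl>\sqrt{\te/2}$, and then $\nul=\frac1\te\,\phi(\yl)>\frac1\te\cdot2\yl>\frac2\te\sqrt{\te/2}=\sqrt{2/\te}\to+\infty$, completing the proof.
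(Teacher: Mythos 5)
Your proof is correct, and for most of the statement it follows the paper's route: everything is pushed through the strictly increasing bijection \eqref{cor4bijection} from the corresponding facts about $\yr$ and $\yl$ in Lemmas~\ref{lem3} and~\ref{lem4}, exactly as the paper does for the monotonicity claims, for \eqref{cor5lowerupper}, for \eqref{cor5klim}, and for the $\te\to\frac{2k}{2k+1}-0$ limit. The one step where you genuinely diverge is the delicate indeterminate limit $\lim_{\te\to 0+0}\nul=+\infty$. The paper resolves it by first establishing the monotonicity of $k\mapsto\nul$ and the limit \eqref{cor5klim}, which together give $\nul\ge\frac{1}{1-\te}\sqrt{\frac{2-\te}{\te}}$ for all $k$, and then letting $\te\to 0+0$ in this $k$-independent lower bound. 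You instead go back to the defining relation $\te=\Theta(\yl,k)$ and prove the elementary pointwise bound $\Theta(y,k)<2y^2$ (your rearrangement $y^{4k}(1-y^2)+y^2(1-y^{4k+2})>0$ is correct), which yields $\yl>\sqrt{\te/2}$ and hence the explicit estimate $\nul>\sqrt{2/\te}$. Your argument is more self-contained---it does not lean on the monotonicity in $k$ or on \eqref{cor5klim}---and it produces a concrete quantitative divergence rate, whereas the paper's argument reuses machinery already in place and so costs nothing extra. A second, minor difference: the paper only asserts the asymptotic series for the bounds in \eqref{cor5lowerupper}, while you actually derive it via the identity $\frac{2c}{c^2-1}=\frac{1}{\sinh\epsilon}$ for $c=e^{\epsilon}$ with $\epsilon=\frac{\ln(\sqrt{2}+1)}{2k\mp 1}$; this is a clean way to obtain the stated constants $\frac{2}{\ln(\sqrt{2}+1)}$ and $\mp\frac{1}{\ln(\sqrt{2}+1)}$.
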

\begin{proof} (\textit{i}) The monotonicity of $\nur$ in $k$ for fixed $\te$ follows from the monotonicity of 
$\yr$ in Lemma \ref{lem3} together with \eqref{cor4bijection}, and inequality \eqref{cor5lowerupper} is 
just \eqref{yrasympt} under the transformation \eqref{cor4bijection}.\\
(\textit{ii}) We similarly obtain the monotonicity of $\nul$ in $k$ for fixed $\te$, and the limit \eqref{cor5klim} from Lemma \ref{lem4} via \eqref{cor4bijection}, by also noting that
\[
\frac{2\sqrt{\frac{\te}{2-\te}}}{1-\left(\sqrt{\frac{\te}{2-\te}}\right)^2}\cdot \frac{1}{\te}=\frac{1}{1-\theta }\sqrt{\frac{2-\theta }{\theta }}.
\]
As for the $\te\to\frac{2k}{2k+1}-0$ limit in \eqref{cor5thetalim}, we know from Lemma \ref{lem4} that $\yl\to 1$ (from below), and $\lim_{y\to 1-0}\frac{2y}{1-y^2}=+\infty$, hence $\nul\to+\infty$ when $\te\to\frac{2k}{2k+1}-0$.

One needs to take care only when evaluating the $\te\to 0+0$ limit in \eqref{cor5thetalim} for fixed $k\in\nplus$,  since $\frac{2\yl}{1-\yl^2}\to 0$ and $\frac{1}{\te}\to+\infty$ in \eqref{nuldef} when $\te\to0+0$. But 
the monotonicity of $\nul$ in $k$ for fixed $\te$, and \eqref{cor5klim} imply for any $k$ and $\te\in(0,1)$ that 
\[
\nul\ge\frac{1}{1-\theta }\sqrt{\frac{2-\theta }{\theta }},
\]
and the right-hand side here tends to $+\infty$ as $\te\to 0+0$.
\end{proof}

\begin{figure}
\begin{center}
\includegraphics[width=0.48\textwidth]{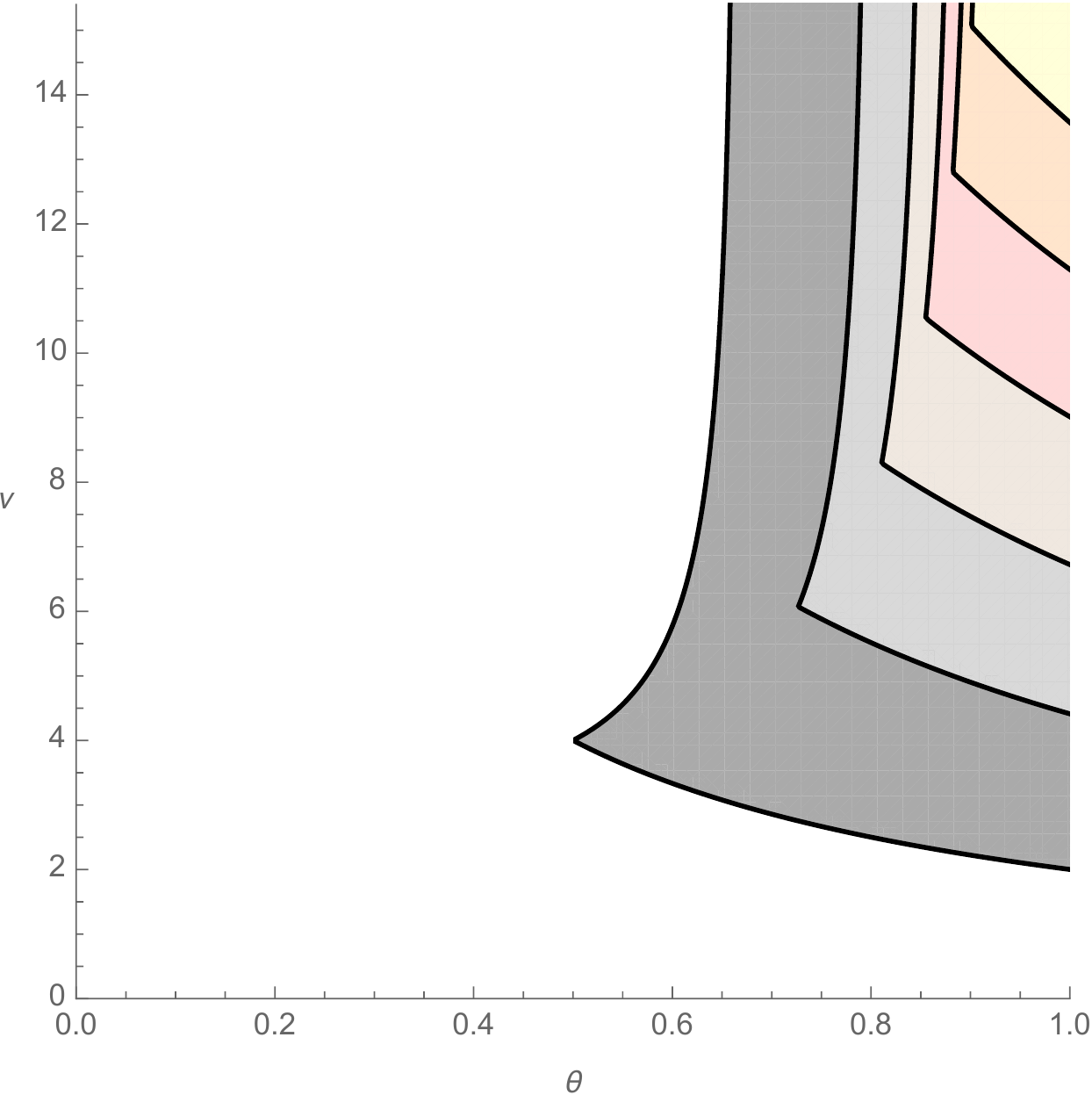}
\caption{The parameter regions in the $(\te,\nu)$ parameter plane ensuring $M(2k+1,\te,\nu)\ge 0$ for $k=1, 2, \ldots, 6$ (different values of $k$ are represented by different colors). The regions continue to extend to infinity ``upward'', but ``shrink'' in the horizontal direction as $k$ is increased.}\label{fig_variousk}
\end{center}
\end{figure}

\begin{figure}
\begin{center}
\includegraphics[width=0.48\textwidth]{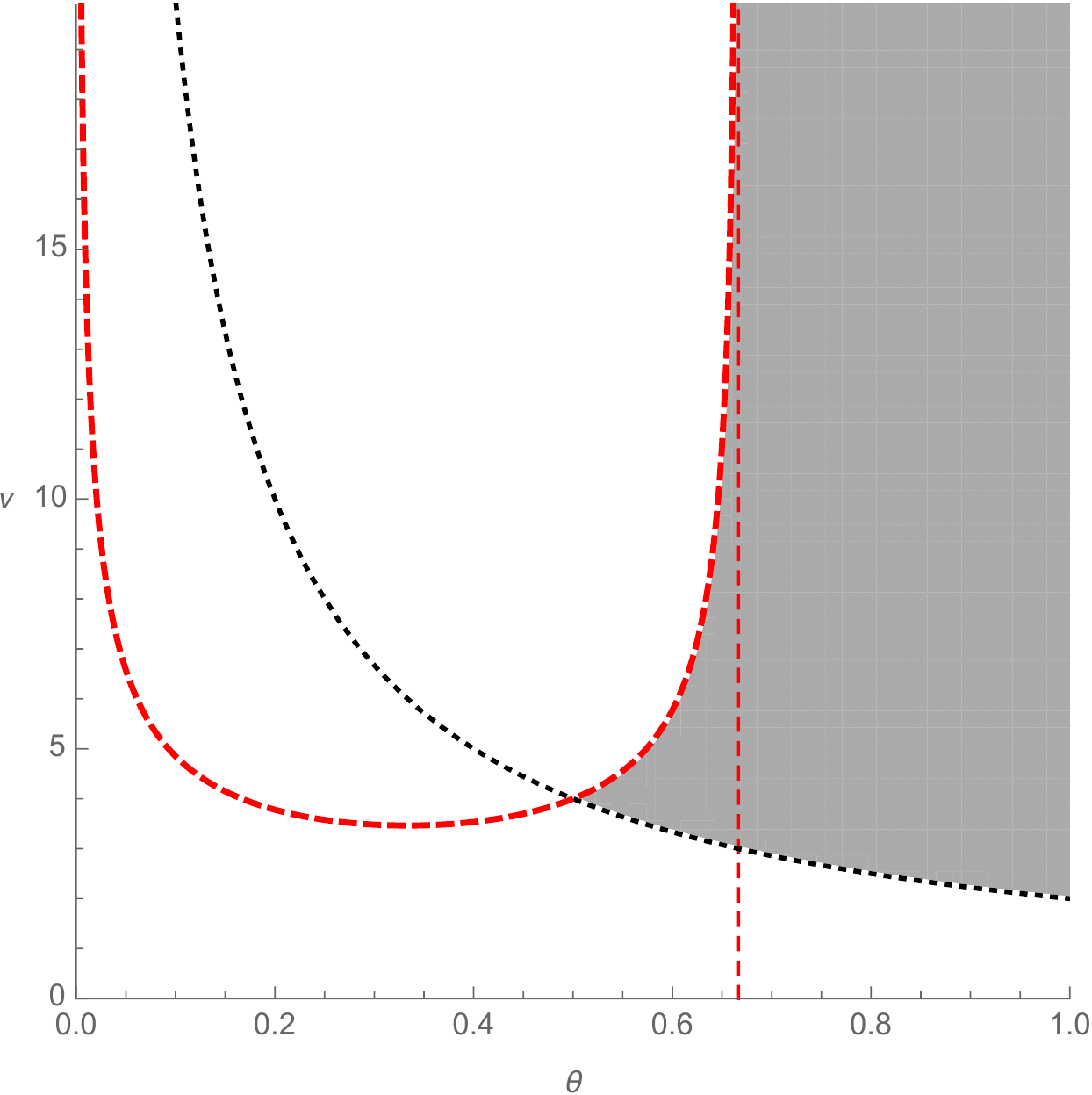}
\caption{A typical shaded region in Figure~\ref{fig_variousk} for which $M(2k+1,\te,\nu)\ge 0$; in this particular case, for $k=1$. The gray region is described by the inequalities $\nur\le\nu\le\nul$. The black dotted curve represents the function $\te\mapsto\nur$, while the red dashed curve is the function
$\te\mapsto\nul$, having a vertical asymptote at $\te=2k/(2k+1)$.}\label{fig_boundary}
\end{center}
\end{figure}

The following result explains why the ``left half'' of Figure~\ref{fig_variousk} is ``empty'' (cf.~Corollary \ref{cor4})---the result is non-trivial, since for fixed $k$, $\lim_{\te\to 0+0} \nul=+\infty=\lim_{\te\to 0+0} \nur$ (cf.~Figure~\ref{fig_boundary}).
\begin{lemma}\label{lem5} For any $k\in\nplus$ there is a unique $\te_k\in\left[\frac{1}{2},\frac{2k}{2k+1}\right)$  such that 
\[
\nur = \nul\ \ \  (\text{for } \te\in(0,1])\quad \Longleftrightarrow \quad \te= \te_k.
\]
This $\te_k$ also satisfies
\[
\nur < \nul\quad \Longleftrightarrow \quad \te> \te_k.
\]
Moreover, the sequence $\te_k$ is strictly increasing in $k$, and  $\te_1=\frac{1}{2}$. In particular, for any $k\in\nplus$ and $\te\in\left(0,\frac{1}{2}\right)$ we have 
\[
 \nur>\nul.
\] 
\end{lemma}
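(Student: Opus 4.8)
The plan is to reduce the identity $\nur=\nul$ to a single-variable equation in $y$ and analyze it via the quantities $\yr$ and $\yl$ already studied in Lemmas~\ref{lem3} and \ref{lem4}. The key observation is that, by the strictly increasing bijection \eqref{cor4bijection}, the inequality $\nur<\nul$ is equivalent to $\yr<\yl$ (and likewise for $=$ and $>$). However, $\yr$ depends only on $k$, whereas $\yl$ depends on both $k$ and $\te$; so I would hold $k$ fixed and study how the sign of $\yl-\yr$ varies as $\te$ ranges over $(0,1]$. Since Lemma~\ref{lem4} tells us that $\te\mapsto\yl$ is \emph{strictly increasing} on $\left(0,\tfrac{2k}{2k+1}\right)$ with $\lim_{\te\to0+0}\yl=0$ and $\lim_{\te\to\frac{2k}{2k+1}-0}\yl=1$, while $\yr\in(0,1)$ is a fixed constant independent of $\te$, it follows immediately from the intermediate value theorem and strict monotonicity that there is a \emph{unique} $\te_k\in\left(0,\tfrac{2k}{2k+1}\right)$ with $\yl=\yr$, that $\yl<\yr$ (hence $\nur>\nul$) for $\te<\te_k$, and $\yl>\yr$ (hence $\nur<\nul$) for $\te>\te_k$. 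On the remaining interval $\left[\tfrac{2k}{2k+1},1\right]$ we have $\nul=+\infty>\nur$ by \eqref{nuldef}, consistent with $\te>\te_k$. This already yields existence, uniqueness, and the sign characterization.

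The substantive remaining claims are the \textbf{location} $\te_k\ge\tfrac12$, the \textbf{base case} $\te_1=\tfrac12$, and the \textbf{monotonicity} $\te_k<\te_{k+1}$. For $\te_1=\tfrac12$: I would substitute $\te=\tfrac12$ into the defining equation $\yl(k,\te)=\yr(k)$ for $k=1$ and verify directly, using the explicit polynomials $\Por=y^{4}+y^{3}+y-1$ and $\Pol=-\theta y^{8}-(\theta-2)y^{6}+(\theta-2)y^{2}+\theta$ from \eqref{pordef} and \eqref{poldef}; equivalently, via the criterion \eqref{lem4equi}, it suffices to check that $\Theta(\yr,1)=\tfrac12$ where $\yr$ is the unique root of $\Por$ for $k=1$. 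This is a finite algebraic verification.

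For the bound $\te_k\ge\tfrac12$ in general, the cleanest route is to use the equivalence $\Pol\lesseqqgtr0\Leftrightarrow\te\lesseqqgtr\Theta(y,k)$ from \eqref{lem4equi}: the equality $\yl=\yr$ means precisely $\te_k=\Theta(\yr,k)$, so I must show $\Theta(\yr,k)\ge\tfrac12$, i.e. (unwinding the definition of $\Theta$) that $4\yr^2(1-\yr^{4k})\ge(1+\yr^2)(1-\yr^{4k+2})$. Here I would substitute the defining relation of $\yr$, namely $\yr^{4k}+\yr^{2k+1}+\yr^{2k-1}=1$ (equivalently $\yr^{4k}=1-\yr^{2k-1}(1+\yr^2)$), to eliminate the high powers and reduce the inequality to a polynomial inequality in $\yr$ of bounded complexity, which can then be shown to hold on $(0,1)$ using the bounds \eqref{yrasympt} on $\yr$.

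The monotonicity $\te_k<\te_{k+1}$ is where I expect the \textbf{main obstacle}. Since $\te_k=\Theta(\yr,k)$, I must compare $\Theta(y_R(k),k)$ with $\Theta(y_R(k+1),k+1)$, and both the base point $\yr$ and the function $\Theta(\cdot,k)$ change with $k$. Two competing effects are in play: Lemma~\ref{lem3} gives $y_R(k)<y_R(k+1)$ and \eqref{Thetaincr} gives that $\Theta(\cdot,k)$ is increasing in $y$ (pushing $\te_k$ up), while the proof of Corollary~\ref{cor5}(ii) shows $\Theta(y,k+1)>\Theta(y,k)$ for fixed $y$ (also pushing up). Both increments point the same direction, so the inequality is plausible, but making it rigorous requires controlling the interaction. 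I would write
\[
\te_{k+1}-\te_k=\bigl(\Theta(y_R(k{+}1),k{+}1)-\Theta(y_R(k),k{+}1)\bigr)+\bigl(\Theta(y_R(k),k{+}1)-\Theta(y_R(k),k)\bigr),
\]
where the first bracket is positive because $\Theta(\cdot,k+1)$ is strictly increasing and $y_R(k)<y_R(k+1)$, and the second bracket is positive by the explicit telescoping difference $\Theta(y,k+1)-\Theta(y,k)=\frac{2(1-y^2)^2y^{4k+2}}{(1-y^{4k+2})(1-y^{4k+6})}>0$ computed in Corollary~\ref{cor5}. This decomposition cleanly separates the two monotonicities and turns the comparison into two already-established facts, so the obstacle is really just organizing the argument rather than new estimates. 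Finally, the last assertion—that $\nur>\nul$ for all $k$ and $\te\in(0,\tfrac12)$—follows at once, since $\te_k\ge\tfrac12>\te$ forces $\te<\te_k$, which by the sign characterization gives $\nur>\nul$.
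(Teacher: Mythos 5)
Your proposal is correct and follows essentially the paper's route: reduce $\nur=\nul$ to $\yr=\yl$ via the bijection \eqref{cor4bijection}, identify $\te_k=\Theta(\yr,k)$, verify $k=1$ explicitly, and get monotonicity of $\te_k$ from the monotonicity properties of $\Theta$ and $\yr$. Your two-bracket decomposition of $\te_{k+1}-\te_k$ is in fact slightly more careful than the paper's one-line version, which cites only the monotonicity of $\Theta$ in $y$ and leaves the $k$-dependence of $\Theta$ implicit. One caveat: your proposed \emph{independent} verification of $\te_k\ge\tfrac12$ via \eqref{yrasympt} is both unnecessary (it follows at once from $\te_1=\tfrac12$ and $\te_k<\te_{k+1}$, which is how the paper gets it) and, as sketched, delicate --- after substituting $\yr^{4k}=1-\yr^{2k-1}(1+\yr^2)$ the inequality $\Theta(\yr,k)\ge\tfrac12$ reduces to $y^{2k+1}(3-y^2)\ge 1-y^2$ at $y=\yr$, which holds with \emph{equality} at $k=1$ (where $y_R(1)=(\sqrt5-1)/2$), so the crude lower bound $(\sqrt2-1)^{1/(2k-1)}$ from \eqref{yrasympt} is not sufficient there; drop that sub-step and rely on the base case plus monotonicity.
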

\begin{proof} Let us fix $k$. Due to \eqref{nurdef}--\eqref{nuldef}, $\nur$ is finite but $\nul$ is infinite for any $\te\in\left[\frac{2k}{2k+1},1\right]$, so $\nur = \nul$ cannot hold. For $\te\in\left(0,\frac{2k}{2k+1}\right)$, by also using \eqref{cor4bijection}, we have 
\[
\nur = \nul\quad \Longleftrightarrow \quad \frac{2\yr}{1-\yr^2}=\frac{2\yl}{1-\yl^2} \quad \Longleftrightarrow
\quad \yr=\yl.
\]
Here $\yr$ is independent of $\te$, and $k$ is fixed, so by definitions \eqref{yrdef} and \eqref{yldef} this means that $\te$ must be chosen in a way such that $P_{L,k,\te}(\yr)=0$. By using the notation introduced in the proof of Lemma \ref{lem4}, this is equivalent to $\te=\Theta(\yr,k)=:\te_k$. Hence, if $\nur = \nul$ holds for some $\te\in(0,1]$, then $\te=\te_k\in\left(0,\frac{2k}{2k+1}\right)$. Now, by Lemma \ref{lem3}, we have $\yr<y_R(k+1)$, and $\Theta$ is strictly increasing in its first argument (see \eqref{Thetaincr}), so the sequence $\te_k$ is also strictly increasing. 

The same monotonicity argument shows that $\nur < \nul$ holds for some $\te\in(0,1]$ if and only if $\te>\te_k$
(see \eqref{lem4equi} and the characterization of  $P_{L,k,\te}>0$ in Lemma \ref{lem4}). 

For $k=1$, one explicitly computes that
\begin{equation}\label{nurl1thetaconcrete}
\nu_R(1,\theta)=\frac{2}{\te}\quad\text{ and }\quad\nu_L(1,\theta)=\frac{2}{\sqrt{\theta  (2-3 \theta)}},
\end{equation}
so $\nu_R(1,\theta)=\nu_L(1,\theta) \Longleftrightarrow \te=\te_1=1/2$. Therefore, we have
$\te_k\in\left(\frac{1}{2},\frac{2k}{2k+1}\right)$ for $k\ge 2$, also implying that for any $k\in\nplus$ and 
$\te\in\left(0,\frac{1}{2}\right)$, we have $\nur>\nul$.
\end{proof}

The following theorem summarizes the results of Section~\ref{section3}.
In the theorem, we assume $k\in\nplus$, $\te\in[0,1]$ and $\nu\in(0,+\infty)$.
\begin{theorem}[About the full discretization matrix corresponding to the $2^{\text{nd}}$-order centered discretization in space and $\theta$-method in time]\label{thm2}
\begin{itemize}\ 
\item[$\bullet$] Fix $0\le\te<1/2$ arbitrarily. Then  $M(2k+1,\te,\nu)\ge 0$ can never hold, i.e.~for any $k\in\nplus$ and $\nu>0$ there is at least one strictly negative entry of the matrix $M$.
\item[$\bullet$] Let $\te=1/2$. Then
\[
M\left(2k+1,\frac{1}{2},\nu\right)\ge 0\quad\Longleftrightarrow\quad k=1\text{ and } \nu=4\quad\text{(see }\eqref{rem10thetahalfmatrix}\text{)}.
\]
\item[$\bullet$] Fix $1/2<\te<1$ arbitrarily. Then there are finitely many values of $k$ for which there exists $\nu>0$ with $M(2k+1,\te,\nu)\ge 0$. For any such value of $k$, the set of admissible values of $\nu$ has the form $\nur\le\nu\le\nul$, with suitable constants $0<\nur\le\nul\le+\infty$ (the possible case $\nul=+\infty$ means that there is no upper but only a lower bound on $\nu$); see also Corollary \ref{cor5}.
\item[$\bullet$] Let $\te=1$. Then for each $k\in\nplus$ there is a constant $\nu_R(k,1)>0$ such that
\[
M(2k+1,1,\nu)\ge 0\quad\Longleftrightarrow\quad \nu\ge\nu_R(k,1).
\]
In addition, $\nu_R(k,1)<\nu_R(k+1,1)$ for any $k$, $\lim_{k\to+\infty} \nu_R(k,1)=+\infty$, and the two-sided estimates in \eqref{cor5lowerupper} with $\te=1$ hold.
\end{itemize}
\end{theorem}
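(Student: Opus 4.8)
The plan is to reduce the whole theorem to the interval description already in hand and then read off each of the four $\te$-regimes from the monotonicity and limit data for the endpoints $\nur$ and $\nul$. Since $M$ is circulant, $M\ge 0$ is equivalent to non-negativity of its first row, i.e.\ to $M_{1,j}\ge 0$ for all $1\le j\le 2k+1$; by Corollary~\ref{cor4} this is in turn equivalent to $\nur\le\nu\le\nul$. Thus, once $\te$ is fixed, the admissible set of $\nu$ is exactly the (possibly empty, possibly unbounded) interval $[\nur,\nul]$, and the entire argument becomes a matter of deciding, for each $\te$, whether and for which $k$ this interval is non-empty. The sign of $\nul-\nur$ is governed by comparing $\te$ with the threshold $\te_k$ from Lemma~\ref{lem5}.

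First I would dispose of $\te<1/2$ and $\te=1/2$. For $\te=0$ the matrix is $M=I+\nu L$, which cannot be non-negative, as already noted after \eqref{Mdef}. For $0<\te<1/2$, Lemma~\ref{lem5} gives $\nur>\nul$ for every $k$, so $[\nur,\nul]=\emptyset$ and $M\ge 0$ never holds, proving the first bullet. For $\te=1/2$ I would use that the sequence $\te_k$ is strictly increasing with $\te_1=1/2$ (Lemma~\ref{lem5}): hence $\te=\te_k$ occurs only at $k=1$, where $\nur=\nul$ collapses the interval to a single point, while $\te<\te_k$ for all $k\ge 2$, forcing $\nur>\nul$ and an empty interval. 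It then remains to identify the single admissible value at $k=1$, which follows from the explicit formulae \eqref{nurl1thetaconcrete}: both $\nu_R(1,1/2)$ and $\nu_L(1,1/2)$ equal $4$, giving the second bullet (with the concrete matrix in \eqref{rem10thetahalfmatrix}).

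The third bullet ($1/2<\te<1$) is where the real content lies, and I expect the finiteness statement to be the main obstacle. By Corollary~\ref{cor4} the interval $[\nur,\nul]$ is non-empty precisely when $\nur\le\nul$, which by Lemma~\ref{lem5} is equivalent to $\te\ge\te_k$; since $\te>1/2=\te_1$ and $\te_k$ is strictly increasing, the admissible indices form an initial segment $\{k:\te_k\le\te\}$, and $k=1$ always qualifies. The only thing to rule out is that this segment is infinite. Here I would \emph{not} estimate $\te_k$ directly, since, as Figure~\ref{fig_boundary} and \eqref{cor5thetalim} show, both endpoints blow up as $\te\to0+$, so no crude uniform bound is available. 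Instead I would exploit the asymmetric growth in $k$ supplied by Corollary~\ref{cor5}: for fixed $\te\in(0,1)$ the lower endpoint satisfies $\nur\to+\infty$ (strictly increasing), whereas once $k$ is large enough that $\te<\frac{2k}{2k+1}$ the upper endpoint $\nul$ is finite, strictly decreasing, with the bounded limit $\frac{1}{1-\theta}\sqrt{\frac{2-\theta}{\theta}}$ of \eqref{cor5klim}. Hence $\nur>\nul$ for all large $k$, i.e.\ $\te_k>\te$ eventually, so the admissible segment is finite. For each admissible $k$ the shape of the $\nu$-set is then exactly $\nur\le\nu\le\nul$ by Corollary~\ref{cor4}, with the convention $\nul=+\infty$ (no upper bound) precisely when $\te\ge\frac{2k}{2k+1}$, cf.\ \eqref{nuldef}.

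Finally, for $\te=1$ I would observe that $\frac{2k}{2k+1}<1$ for every $k$, so by \eqref{nuldef} we always have $\nul=+\infty$; Corollary~\ref{cor4} then reduces $M\ge 0$ to the one-sided condition $\nu\ge\nur=\nu_R(k,1)$. The remaining assertions, namely the strict monotonicity $\nu_R(k,1)<\nu_R(k+1,1)$, the divergence $\lim_{k\to+\infty}\nu_R(k,1)=+\infty$, and the two-sided estimates, are exactly Corollary~\ref{cor5}(i) specialized to $\te=1$. Assembling the four regimes yields the theorem, the only genuinely new step being the sign-change (asymmetric-growth) argument for $\nul-\nur$ in the third bullet.
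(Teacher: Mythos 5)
Your proposal is correct and follows essentially the same route as the paper's proof: reduction to the interval $\nur\le\nu\le\nul$ via Corollary~\ref{cor4}, the $\te<1/2$ and $\te=1/2$ cases via Lemma~\ref{lem5} and the explicit values \eqref{nurl1thetaconcrete}, finiteness for $1/2<\te<1$ from the opposing monotonicity/limit behavior of $\nur$ and $\nul$ in Corollary~\ref{cor5}, and the $\te=1$ case from $\nul=+\infty$ together with Corollary~\ref{cor5}(i). The only cosmetic difference is that for $k\ge2$ at $\te=1/2$ you invoke the monotonicity of the thresholds $\te_k$, whereas the paper chains the inequalities $\nu_L(k,1/2)<\nu_L(1,1/2)=\nu_R(1,1/2)<\nu_R(k,1/2)$ directly; these are equivalent.
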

\begin{proof} The case $\te=0$ has already been discussed at the beginning of Section~\ref{section3}.
In general, for $\te\in\left(0,1\right]$, we know from Corollary \ref{cor4} that, for any $k$, the set of $\nu$ values for which $M(2k+1,\te,\nu)\ge 0$ holds has the form $\nur\le\nu\le\nul$. 

The range $\te\in\left(0,\frac{1}{2}\right)$ is covered by Lemma \ref{lem5}. 

For $\te=1/2$, \eqref{nurl1thetaconcrete} shows that for $k=1$ one has $\nu_R(1,1/2)=\nu_L(1,1/2)=4$. 
But for any $k\ge 2$ we know (see Corollary \ref{cor5}) that
\[
\nu_L(k,1/2)<\nu_L(1,1/2)=\nu_R(1,1/2)<\nu_R(k,1/2),
\]
hence $\nu_R(k,1/2)\le\nu_L(k,1/2)$ cannot hold for any $k\ge 2$.

For fixed $1/2<\te<1$, $\nul$ becomes finite for all sufficiently large $k$ (see \eqref{nuldef}). But according to Corollary \ref{cor5}, $\nul$ is decreasing in $k$ for $\te<\frac{2k}{2k+1}$, and 
$\lim_{k\to+\infty} \nur=+\infty$, so the inequality $\nur\le\nul$ can hold only for finitely many values of $k$.

Finally, for $\te=1$, $\nu_L(k,1)=+\infty$ and we can use Corollary \ref{cor5} (\textit{i}) with $\te=1$.
\end{proof}

\begin{remark}
The ``lower left corner point'' of each shaded region in Figure~\ref{fig_variousk} corresponds to a pair 
$(\te,\nu)$  for which $\nu=\nur=\nul$. This means that here the leftmost and the rightmost entries of the first row of $M(2k+1,\te,\nu)$ simultaneously vanish (and the other entries are non-negative). For $k=1$, this happens for $\te=\te_1=1/2$ and $\nu=4$; the corresponding matrix is
\begin{equation}\label{rem10thetahalfmatrix}
M\left(3,\frac{1}{2},4\right)=\left(
\begin{array}{ccc}
 0 & 1 & 0 \\
 0 & 0 & 1 \\
 1 & 0 & 0 \\
\end{array}
\right).
\end{equation}
\end{remark}

\section{Other spatial discretizations}\label{otherdisc}

The spatial semi-discretization considered in Sections~\ref{sectioncentered}--\ref{section3} is not positivity preserving. The same will hold true for each spatial semi-discretization to be investigated in Sections~\ref{highercentered}--\ref{spectrcoll} below: it is well-known \cite[Chapter I,
Theorem 7.2]{hundsdorferverwer} that a linear constant-coefficient system of ordinary differential equations 
\eqref{semi-discrete} is positivity preserving if and only if the
matrix $\frac{a}{\dx}L$ has no negative off-diagonal entries. The violation of this last condition is clear for the matrix 
$L$ in \eqref{Ldef}, for all matrices $L$ in Section~\ref{highercentered}, and also for the ones in Section~\ref{spectrcoll} (due to $L_{1,2}=-L_{2,1}\ne 0$).

However, as we will see, it is again possible to obtain a positivity-preserving full discretization scheme when the spatial
discretizations covered in this section (higher-order centered
spatial discretizations and Fourier spectral collocation methods) are suitably combined  with the $\theta$-method.
\begin{remark}
For $\theta=0$ (that is, when the explicit Euler time discretization is applied), the matrix $M$ in
\eqref{Mdualdef} becomes $M = I + \nu L$, hence $M \ge 0$ cannot hold for any $\nu > 0$ due to the
negative off-diagonal entries of $L$.
Therefore, in what follows, we can assume $\theta \in (0,1]$.
\end{remark}


\subsection{Higher-order centered discretizations in space, \texorpdfstring{$\theta$}{}-method in time}\label{highercentered}
The coefficients of the centered differences can be found, e.g., in \cite{bengt}, from which the
corresponding circulant matrices $L$ describing the spatial discretization can be constructed.
Here, we examine the first few cases.
\begin{itemize}
\item When the stencil width is 3 (implying  $m\ge 3$ and $2^\text{nd}$-order accuracy), the entries on
the central diagonals are $(-1/2, 0, 1/2)$. This is matrix $L$ in \eqref{Ldef} that has been considered in
Sections~\ref{sectioncentered}--\ref{section3}.
\item When the stencil width is 5 (implying  $m\ge 5$ and $4^\text{th}$-order accuracy), the entries on
the central diagonals are $(1/12, -2/3, 0, 2/3, -1/12)$.
\item When the stencil width is 7 (implying $m\ge 7$ and $6^\text{th}$-order accuracy), the entries on the
central diagonals are $(-1/60, 3/20, -3/4, 0, 3/4, -3/20, 1/60)$.
\end{itemize}
In all above central diagonals, the middle $0$ corresponds to the main diagonal.
We can obtain an eigendecomposition \eqref{eigen} for the matrix $L$, with eigenvectors given by
\eqref{eigenvectrors} and eigenvalues $\lambda_\ell = \imath \psi(\xi_\ell)$, where
\begin{align}\label{psi}
	\psi(x) := 2 \sum_{k=1}^{(N-1)/2} C_k \sin(k x) \quad (x\in\rr).
\end{align}
As before, $\xi_\ell$ is defined in \eqref{xildef}, and $C$ is a vector consisting of the last $(N-1)/2$
coefficients of the central diagonals of $L$, with $N$ denoting the stencil width.
For instance, the vector $C$ is equal to $(1/2)$, $(2/3,-1/12)$, and $(3/4,-3/20,1/60)$ for stencil widths
$3$, $5$, and $7$, respectively.

After the matrix $L$ has been chosen, we couple this spatial discretization with the $\theta$-method as time
discretization, and the full discretization matrix $M$ is obtained (see \eqref{Mdualdef} and \eqref{R}).
As seen in Section~\ref{sectioncentered}, the matrix $M$ is a real, circulant matrix so it can be characterized by the entries of its first row,
which take the form
\begin{align}\label{M1j_FD}
	  M_{1,j}  = \frac{1}{m} \sum_{\ell=1}^{m} \frac{\left(1-\theta(1-\theta)\nu^2\psi^2(\xi_\ell)\right)
  \cos((j-1)\xi_\ell) + \nu \psi(\xi_\ell)\sin((j-1)\xi_\ell)}{1+\theta^2\nu^2 \psi^2(\xi_\ell)} \quad (1\le j \le m).
\end{align}
Our computations suggest that the non-negativity properties of the matrix family $M(m,\theta,\nu)$ again
depend on the parity of $m$.

\begin{description}[style=unboxed,leftmargin=0cm]
\item [{Case 1:} $m$ is {even}.]
\item \noindent By using symbolic calculations,  we have found that, for the $4^\text{th}$-order scheme, 
$M(m,\theta,\nu)\ge 0$ cannot hold for any $\theta\in(0,1]$ and $\nu>0$ when $m\in\{6,8,10\}$.
Similarly, for the $6^\text{th}$-order scheme, we checked (again symbolically) that $M(m,\theta,\nu)\ge 0$
does not hold for any $\theta\in(0,1]$ and $\nu>0$ when $m\in\{8,10\}$.
Therefore, positivity preservation is impossible in these cases.

The following proposition extends the above observations for the $4^\text{th}$-order scheme when $m$ is a
general even number---although only for sufficiently large values of $\nu$.
\begin{proposition}\label{m_even_nonpositivityFE}
	Consider the iterative formula \eqref{M} applied to the advection equation \eqref{advection} with periodic
	boundary condition.
	Let the matrix $M_{m \times m}$ result from the $4^\text{th}$-order centered discretization in space with
	$m$ spatial grid points and the $\theta$-method in time.
	Also, let $\nu$ be the CFL number defined in \eqref{nudef}.
	If $m \ge 6$ is \emph{even}, then there exists $\nu_0  > 0$ such that the matrix $M$ has at least one
	negative entry for any $\nu > \nu_0$.
\end{proposition}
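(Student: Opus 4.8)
The plan is to imitate the large-$\nu$ analysis carried out for the second-order scheme in Section~\ref{sectioncentered}, but applied to the $4^{\text{th}}$-order symbol and focused on a single, well-chosen entry of the first row of $M$. Since $\te=0$ is trivial (then $M=I+\nu L$ has negative off-diagonal entries for every $\nu>0$, so one may take $\nu_0=0$), assume $\te\in(0,1]$. I would work with the rightmost entry $M_{1,m}$: because $m$ is even the index $j=m$ is even, and---exactly as in the even-$m$ discussion of the second-order case---even-indexed entries vanish in the limit $\nu\to+\infty$, so the sign of $M_{1,m}$ for large $\nu$ will be governed by the first non-vanishing term of its $1/\nu$-expansion. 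The starting point is \eqref{M1j_FD} together with the explicit symbol \eqref{psi}, which for the $4^{\text{th}}$-order stencil factors as $\psi(x)=\tfrac{1}{3}\sin(x)(4-\cos(x))$. As $4-\cos(x)\ge 3>0$, the symbol vanishes exactly when $\sin(x)=0$, so among the nodes $\xi_\ell=\frac{2\pi(\ell-1)}{m}$ one has $\psi(\xi_\ell)=0$ precisely for $\ell=1$ and (because $m$ is even) for $\ell=\frac{m}{2}+1$, where $\xi_\ell=\pi$.

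First I would split the $j=m$ sum in \eqref{M1j_FD} into its two ``resonant'' terms ($\psi(\xi_\ell)=0$) and the ``non-resonant'' remainder, and compute $M_{1,m}^\infty:=\lim_{\nu\to+\infty}M_{1,m}$. Using \eqref{angle-identities}, the resonant terms contribute $\frac{1}{m}(\cos 0+\cos((m-1)\pi))=\frac{1}{m}(1+(-1)^{m-1})=0$, while each non-resonant summand tends to $-\frac{1-\te}{\te}\cos(\xi_\ell)$, and $\sum_{\ell:\psi(\xi_\ell)\ne 0}\cos(\xi_\ell)=\sum_{\ell=1}^m\cos(\xi_\ell)-\cos 0-\cos\pi=0-1-(-1)=0$. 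Hence $M_{1,m}^\infty=0$, so $M_{1,m}$ decays at least like $1/\nu$ and its large-$\nu$ sign is fixed by $\lim_{\nu\to+\infty}\nu\,M_{1,m}$. As the resonant summands are constant in $\nu$ and already sum to zero, they do not contribute to this limit; expanding each non-resonant summand to order $1/\nu$ (the numerator's $\nu^1$-coefficient divided by the denominator's $\nu^2$-coefficient) and again invoking \eqref{angle-identities} gives
\[
\lim_{\nu\to+\infty}\nu\,M_{1,m}=\frac{1}{m\te^2}\sum_{\ell:\,\psi(\xi_\ell)\ne 0}\frac{-\sin(\xi_\ell)}{\psi(\xi_\ell)}=-\frac{3}{m\te^2}\sum_{\ell:\,\psi(\xi_\ell)\ne 0}\frac{1}{4-\cos(\xi_\ell)},
\]
where on the non-resonant set I used $\frac{\sin(\xi_\ell)}{\psi(\xi_\ell)}=\frac{3}{4-\cos(\xi_\ell)}$. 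Since $m\ge 6$, the non-resonant set has at least four elements and every term $\frac{1}{4-\cos(\xi_\ell)}$ is strictly positive, so this limit is a strictly negative real number.

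The conclusion is then immediate: $M_{1,m}$ is a rational function of $\nu$ with $M_{1,m}^\infty=0$, so the finite limit $\mathcal{L}:=\lim_{\nu\to+\infty}\nu\,M_{1,m}<0$ exists, whence there is a $\nu_0>0$ with $\nu\,M_{1,m}<0$, i.e.\ $M_{1,m}<0$, for all $\nu>\nu_0$. I expect the only delicate point to be the bookkeeping in the asymptotic expansion---specifically, making precise that the \emph{exact} cancellation of the constant resonant terms is what promotes the $1/\nu$-coefficient to the genuine leading behaviour, so that $\lim_{\nu\to+\infty}\nu\,M_{1,m}$ equals the displayed sum rather than diverging. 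Everything else (the factorization of $\psi$, the counting of the resonant nodes via $m$ even, and the elementary identity $\sin(\xi_\ell)/\psi(\xi_\ell)=3/(4-\cos(\xi_\ell))$) is routine. The analogous argument for the $6^{\text{th}}$-order symbol should go through once one verifies that its symbol again vanishes only where $\sin(\xi_\ell)=0$ and that the corresponding sum remains strictly negative.
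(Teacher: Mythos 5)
Your argument is correct, and it reaches the paper's conclusion by a recognizably different mechanism. Both proofs target the entry $M_{1,m}$ and both ultimately rest on the same two facts—$\sum_{\ell=1}^m\cos(\xi_\ell)=0$ and the factorization $\psi(x)=\tfrac13\sin(x)\bigl(4-\cos(x)\bigr)$ with $4-\cos(x)>0$—but the paper extracts the sign for large $\nu$ by first rewriting $M_{1,m}$ exactly as $-\tfrac{2\nu}{m}\sum_{\ell=2}^{m/2} f(\xi_\ell;\theta,\nu)$ with $f(x;\theta,\nu)=\psi(x)\bigl(\sin x+\theta\nu\psi(x)\cos x\bigr)/\bigl(1+\theta^2\nu^2\psi^2(x)\bigr)$, and then pairing each node $\xi_\ell$ with $\pi-\xi_\ell$ so that the potentially negative $\theta\nu\psi^2\cos$ contributions cancel within each pair; the positivity of each pair sum for $\nu>\nu_\ell$ is read off from an explicit identity for $f(\xi_\ell)+f(\pi-\xi_\ell)$. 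You instead perform a two-term expansion in $1/\nu$: the $O(1)$ part of $M_{1,m}$ vanishes because the cosines sum to zero over both the resonant and non-resonant nodes, and the $O(1/\nu)$ coefficient is the explicitly negative quantity $-\tfrac{3}{m\theta^2}\sum_{\psi(\xi_\ell)\neq 0}\bigl(4-\cos(\xi_\ell)\bigr)^{-1}$. Your bookkeeping is sound: each non-resonant summand is a rational function $\frac{A_\ell\nu^2+B_\ell\nu+C_\ell}{D_\ell\nu^2+1}=\frac{A_\ell}{D_\ell}+\frac{B_\ell}{D_\ell\nu}+O(\nu^{-2})$, the constants $A_\ell/D_\ell=-\tfrac{1-\theta}{\theta}\cos(\xi_\ell)$ sum to zero, the resonant terms are exactly constant in $\nu$ and sum to zero, so $\lim_{\nu\to+\infty}\nu M_{1,m}=\tfrac1m\sum B_\ell/D_\ell$ as you claim, and the identity $\sin(\xi_\ell)/\psi(\xi_\ell)=3/(4-\cos(\xi_\ell))$ keeps every term of the limiting sum positive regardless of whether $\xi_\ell$ lies in $(0,\pi)$ or $(\pi,2\pi)$. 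What your route buys is an explicit leading-order asymptotic constant for $M_{1,m}$ and a criterion that transfers verbatim to the $6^{\text{th}}$-order and spectral cases (one only needs $\sin(\xi_\ell)/\psi(\xi_\ell)>0$ on the non-resonant set, i.e.\ $\psi>0$ on $(0,\pi)$, exactly the condition isolated in Remark~\ref{remark15}); what the paper's pairing buys is an exact, non-asymptotic expression for $M_{1,m}$ from which one could in principle try to push $\nu_0$ down toward the conjectured value $\nu_0=0$. Note that, like the paper's own proof, your $\nu_0$ depends on $\theta$ as well as $m$, which is consistent with how the proposition is interpreted in the text.
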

\begin{proof}
	We show that $M_{1,m} < 0$ for any $\theta \in (0,1]$ and $\nu > \nu_0$,
	where $\nu_0 > 0$ is a constant depending on $m$ and $\theta$.
	
	Let $j = m$ in \eqref{M1j_FD}, then by using \eqref{angle-identities} and
	$\frac{1}{m}\sum_{\ell=1}^m \cos(\xi_\ell) = 0$ we have
	\begin{align}\label{M1m}
		M_{1,m} &= \frac{1}{m} \sum_{\ell=1}^{m} \frac{\left(1-\theta(1-\theta)\nu^2\psi^2(\xi_\ell)
					\right)\cos(\xi_\ell)-\nu\psi(\xi_\ell)\sin(\xi_\ell)}{1+\theta^2\nu^2\psi^2(\xi_\ell)} \nonumber \\
					&= \left(\frac{1}{m} \sum_{\ell=1}^{m} \frac{\left(1-\theta(1-\theta)\nu^2\psi^2(\xi_\ell)\right)
					\cos(\xi_\ell) - \nu\psi(\xi_\ell)\sin(\xi_\ell)}{1+\theta^2\nu^2\psi^2(\xi_\ell)}\right) -
					\frac{1}{m}\sum_{\ell=1}^m \cos(\xi_\ell) \nonumber \\
					&= -\frac{\nu}{m} \sum_{\ell=1}^{m} \frac{\psi(\xi_\ell)\sin(\xi_\ell)+\theta\nu\psi^2(\xi_\ell)
					\cos(\xi_\ell) }{1+\theta^2\nu^2\psi^2(\xi_\ell)} \nonumber \\
					&= -\frac{2\nu}{m} \sum_{\ell=2}^{m/2} \frac{\psi(\xi_\ell)(\sin(\xi_\ell)+\theta\nu\psi(\xi_\ell)
					\cos(\xi_\ell))}{1+\theta^2\nu^2\psi^2(\xi_\ell)},
	\end{align}
	where in the last equality we used $\psi(\xi_1) = \psi(0) = 0$,  $\psi(\xi_{m/2+1}) =  0$, and the symmetry
	of angles $\xi_\ell$ ($1 \le \ell \le m$) about the $x$-axis when $m$ is even (explicitly, the identities
	$\sin\left(k\frac{2\pi(\ell-1)}{m}\right)=-\sin\left(k\frac{2\pi(m-\ell+1)}{m}\right)$ and
	$\cos\left(k\frac{2\pi(\ell-1)}{m}\right)=\cos\left(k\frac{2\pi(m-\ell+1)}{m}\right)$ for positive integers
	$k$, $\ell$ and $m$).
	Define the function 
	\begin{align*}
		f(x; \theta, \nu) \coloneqq \frac{\psi(x)(\sin(x)+\theta\nu\psi(x)\cos(x))}{1+\theta^2\nu^2\psi^2(x)}.
	\end{align*}
	Then, we can express \eqref{M1m} by summing only over indices $\ell$ for which $0 < \xi_\ell  < \pi/2$ (and separating the case $\xi_\ell  = \pi/2$ when $m$ is divisible by 4),
	yielding
	\ifjournal
		\begin{align}\label{M1mnew}
			M_{1,m} = \begin{cases}
								-\dfrac{2\nu\psi(\pi/2)}{m\left(1+\theta^2\nu^2\psi^2(\pi/2)\right)} -
									\dfrac{2\nu}{m} \displaystyle{\sum\limits_{\ell=2}^{m/4}}
									\Big(f(\xi_\ell; \theta, \nu) + f(\pi-\xi_\ell; \theta, \nu) \Big),
									& \text{if } m \equiv 0 \Mod{4}, \\[20pt]
								-\dfrac{2\nu}{m} \displaystyle{\sum\limits_{\ell=2}^{(m+2)/4}}
									\Big(f(\xi_\ell; \theta, \nu) + f(\pi-\xi_\ell; \theta, \nu)\Big),
									& \text{if } m \equiv 2 \Mod{4}.
							\end{cases}
		\end{align}
	\else
		\begin{align}\label{M1mnew}
			M_{1,m} = \begin{cases}
								\dfrac{-2\nu\psi(\pi/2)}{m\left(1+\theta^2\nu^2\psi^2(\pi/2)\right)} -
									\dfrac{2\nu}{m} \displaystyle{\sum\limits_{\ell=2}^{m/4}}
									\Big(f(\xi_\ell; \theta, \nu) + f(\pi-\xi_\ell; \theta, \nu) \Big),
									& \text{if } m \equiv 0 \Mod{4}, \\[20pt]
								-\dfrac{2\nu}{m} \displaystyle{\sum\limits_{\ell=2}^{(m+2)/4}}
									\Big(f(\xi_\ell; \theta, \nu) + f(\pi-\xi_\ell; \theta, \nu)\Big),
									& \text{if } m \equiv 2 \Mod{4}.
							\end{cases}
		\end{align}
	\fi
	We will also use the identity
	\begin{align*}
		&f(\xi_\ell; \theta, \nu) + f(\pi-\xi_\ell; \theta, \nu) = \\
			&\frac{\Bigl(\psi(\xi_\ell) + \psi(\pi-\xi_\ell)\Bigr)
			\Bigl(\sin(\xi_\ell) + \theta\nu\psi(\xi_\ell)\cos(\xi_\ell) +
			\theta\nu\psi(\pi-\xi_\ell)\bigl(\theta\nu\psi(\xi_\ell)\sin(\xi_\ell)-\cos(\xi_\ell)\bigr)\Bigr)}
				{\bigl(1+\theta^2\nu^2\psi^2(\xi_\ell)\bigr)\bigl(1+\theta^2\nu^2\psi^2(\pi-\xi_\ell)\bigr)}.
	\end{align*}
	First, observe that $\sin(\xi_\ell)$ and $\cos(\xi_\ell)$ are positive for each index $2 \le \ell \le (m+2)/4$ in
	\eqref{M1mnew}. Now for the $4^\text{th}$-order centered spatial discretization, easy calculations show that $\psi(\pi/2) = 4/3$,
	
	\begin{align*}
		\psi(\xi_\ell) = \frac{1}{3}\sin(\xi_\ell)(4 - \cos(\xi_\ell)), \quad \text{and} \quad
		\psi(\pi - \xi_\ell) = \frac{1}{3}\sin(\xi_\ell)(4 + \cos(\xi_\ell)),
	\end{align*}
	so they are all positive as well. Let us fix $\theta \in (0,1]$ arbitrarily, and notice that for each $\ell$ we can find $\nu_\ell > 0$ such that
	$\theta\nu\psi(\xi_\ell)\sin(\xi_\ell)-\cos(\xi_\ell)>0$ for $\nu>\nu_\ell$. Let $\nu_0:=\max \nu_\ell$, then  
	$f(\xi_\ell; \theta, \nu) + f(\pi-\xi_\ell; \theta, \nu)>0$ for $\nu>\nu_0$. Therefore, $M_{1,m} < 0$ for any $\theta \in (0,1]$ and $\nu>\nu_0$.

\end{proof}
\begin{remark}\label{remark15} It is easily seen that the previous proof boils down to the fact that for the $4^\text{th}$-order centered spatial discretization we have $\psi(x)>0$ for $x\in(0,\pi)$, where
\[
\psi(x)=2 \left(\frac{2}{3}\sin (x)-\frac{1}{12} \sin (2 x)\right)=\frac{1}{3} \sin (x) \bigl(4-\cos (x)\bigr).
\]
We know from \eqref{psi} that for the $6^\text{th}$-order scheme
\[
\psi(x) = 2 \left(\frac{3}{4}\sin (x)-\frac{3}{20} \sin (2 x)+\frac{1}{60} \sin (3 x)\right) =
	\frac{1}{15} \sin (x) \bigl(23-9\cos (x)+\cos (2x)\bigr),
\]
so we again have $\psi(x)>0$ for $x\in(0,\pi)$. Therefore, the analogue of Proposition \ref{m_even_nonpositivityFE} is true for the $6^\text{th}$-order scheme as well (c.f.~the proof of Proposition \ref{m_even_nonpositivitySpectral}).
\end{remark}
\begin{remark}
	Based on the observations above Proposition~\ref{m_even_nonpositivityFE}, we conjecture the following:
	given an arbitrary finite difference centered discretization in space coupled with the $\theta$-method, then
	$M_{1,m} < 0$ for even $m$, and for \emph{all} values $\nu > 0$ and $\theta \in (0,1]$---that is, $\nu_0=0$ can be chosen in general.
\end{remark}
\item [{Case 2:} $m$ is {odd}.] In this case we have found that positivity preservation is possible for a suitable
set of $\theta\in(0,1]$ and $\nu>0$ values; see  Figures~\ref{fig_5stencilm579} and \ref{fig_7stencilm79}.

Also, if we assume $\psi(\xi_\ell) \ne 0$ for each $2\le\ell\le m$, then we can extend the asymptotic results of
Section~\ref{sectioncentered} for an arbitrary high-order centered discretization (c.f.~the ``odd $m$'' case in Section \ref{spectrcoll}).
\end{description}

\begin{figure}
\begin{center}
\includegraphics[width=0.48\textwidth]{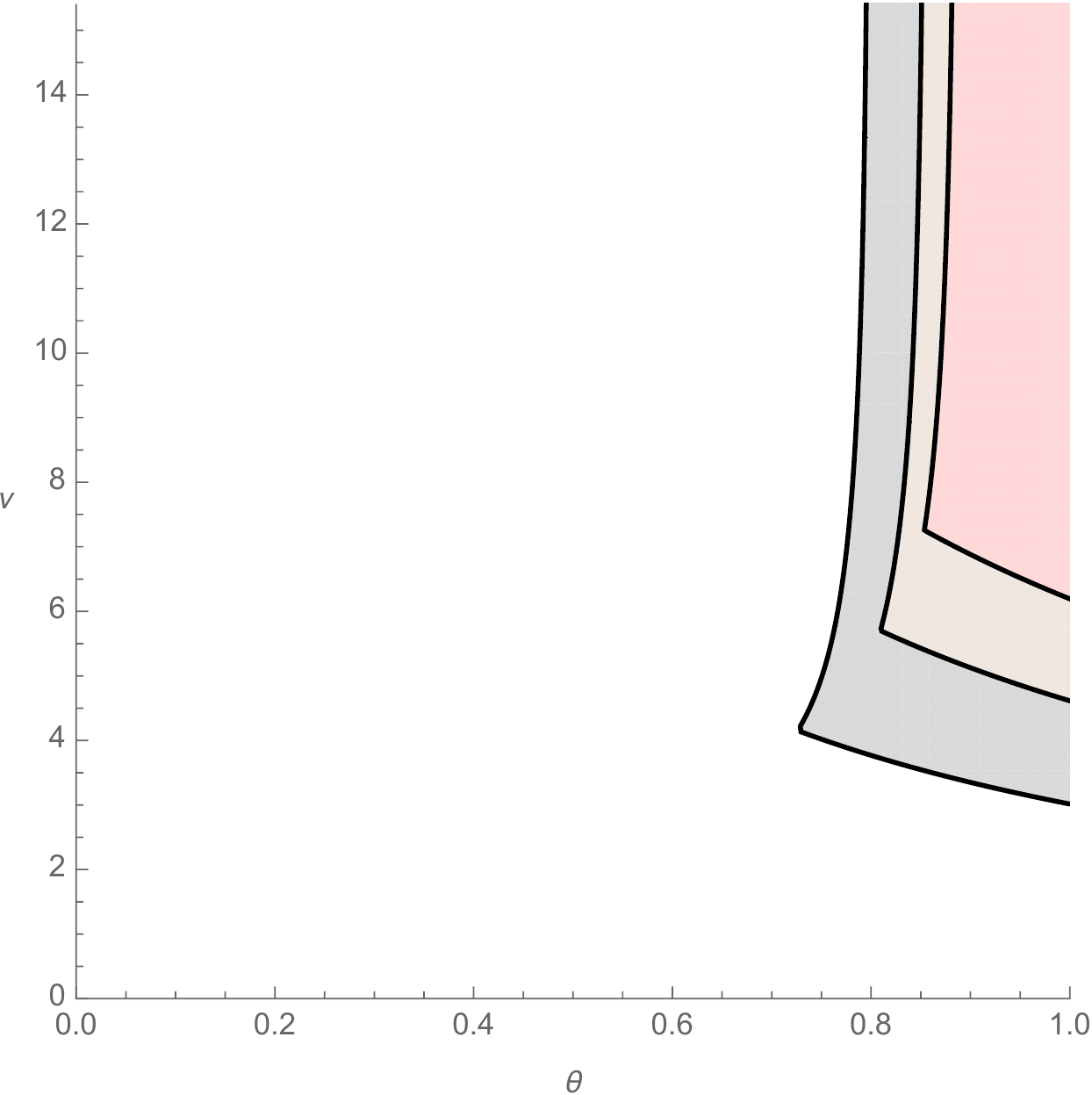}
\caption{Non-negativity of the matrix family generated by the $4^\text{th}$-order centered discretization in space and the $\theta$-method in time; see Section \ref{highercentered}. The parameter regions in the $(\te,\nu)$ parameter plane ensuring $M(m,\te,\nu)\ge 0$ are highlighted for $m\in\{5, 7, 9\}$ (different values of $m$ are represented by different colors; the regions ``shrink'' as $m$ gets larger). For $m=5$, $m=7$, and $m=9$, the ``lower left corner'' point of the region has $\theta\approx 0.726106$, $\theta\approx 0.809401$, and $\theta\approx 0.853562$, respectively.}\label{fig_5stencilm579}
\end{center}
\end{figure}

\begin{figure}
\begin{center}
\includegraphics[width=0.48\textwidth]{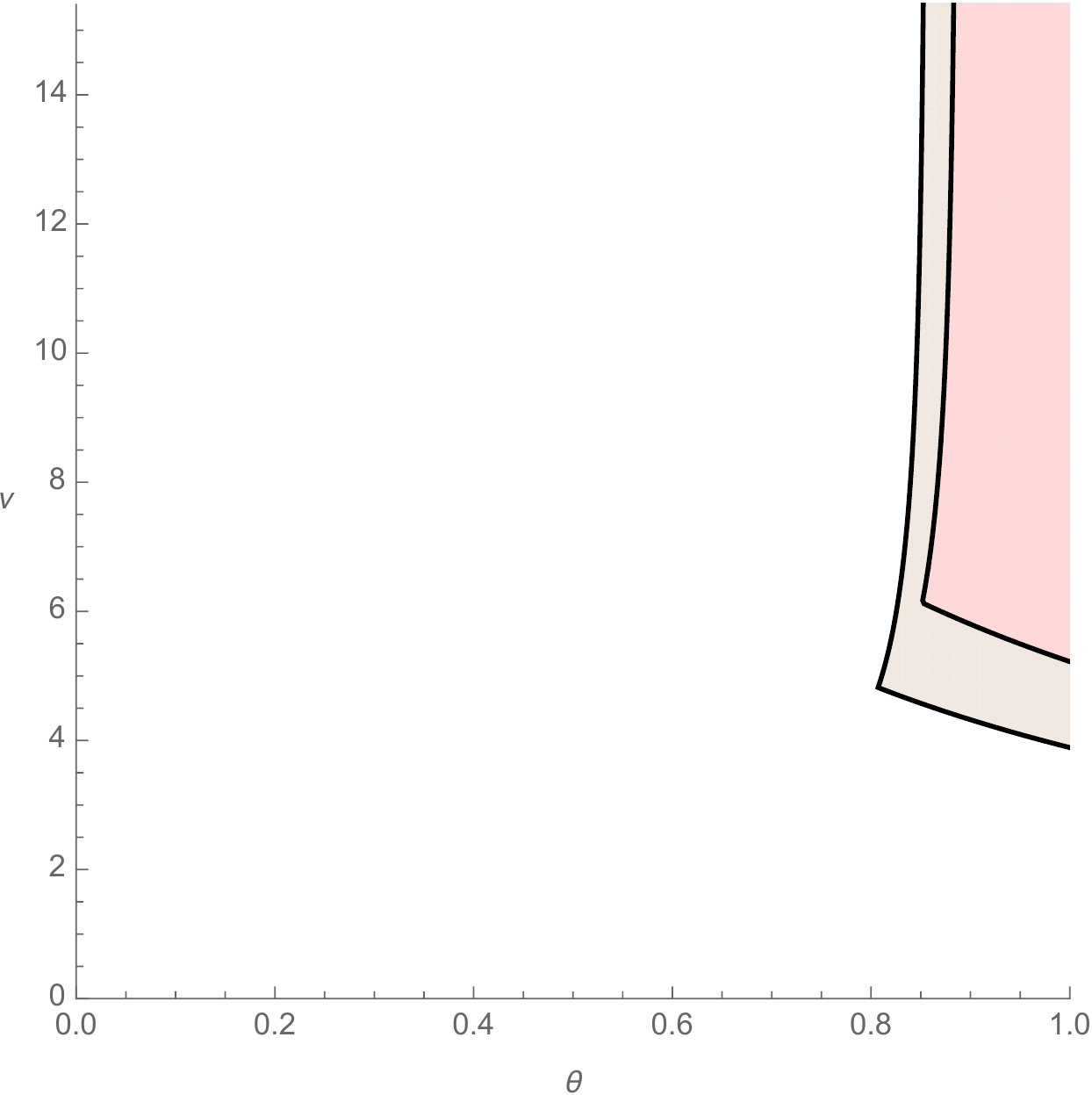}
\caption{Non-negativity of the matrix family generated by the $6^\text{th}$-order centered discretization in space and the $\theta$-method in time; see Section \ref{highercentered}. The parameter regions in the $(\te,\nu)$ parameter plane ensuring $M(m,\te,\nu)\ge 0$ are highlighted for $m\in\{7, 9\}$ (different values of $m$ are represented by different colors; the regions ``shrink'' as $m$ gets larger). For $m=7$ and $m=9$, the ``lower left corner'' point of the region has $\theta\approx 0.807042$ and $\theta\approx 0.851437$, respectively.}\label{fig_7stencilm79}
\end{center}
\end{figure}

\subsection{Fourier spectral collocation in space, \texorpdfstring{$\theta$}{}-method in time}\label{spectrcoll}
Here, we consider the spectral method that results from extending the
finite difference stencil to include the whole spatial grid.  In this section, we assume $m\ge 4$. The resulting spatial 
semi-discretization can again be written in the form \eqref{semi-discrete}
where the matrix $L$ takes the form \cite{trefethen,peyret}
\begin{align*}
	L_{i,j} = \begin{cases}
						0 & \text{if } i = j, \\[10pt]
						\dfrac{\pi}{m}(-1)^{i+j}\cot{\!\left(\frac{(i-j)\pi}{m}\right)} & \text{if } i \neq j,
					\end{cases}
\end{align*}
for even $m$, and
\begin{align*}
	L_{i,j} = \begin{cases}
						0 & \text{if } i = j, \\[10pt]
						\dfrac{\pi}{m}(-1)^{i+j}\csc{\!\left(\frac{(i-j)\pi}{m}\right)} & \text{if } i \neq j,
					\end{cases}
\end{align*}
for odd $m$.
As in Section~\ref{sectiondiscFourier}, the spectral collocation matrices have an eigendecomposition given by
\eqref{eigen}, with eigenvectors \eqref{eigenvectrors}, but now the entries of the diagonal matrix $\Lambda$ are
\begin{align}\label{lambdaSCdef}
    \imath\lambda_\ell & =
    \begin{cases}
        \imath\xi_\ell & 1 \le \ell < \frac{m}{2}+1, \\
        0 & \ell = \frac{m}{2}+1 \text{ and } m \text{ is even}, \\ 
        \imath(\xi_\ell - 2\pi) & \frac{m}{2}+1 < \ell \le m,
    \end{cases}
\end{align}
where $\xi_\ell$ has been defined in \eqref{xildef}.

When this matrix $L$ is coupled with the $\theta$-method as time discretization, the full discretization
matrix $M$ in \eqref{Mdualdef} is obtained.
The entries of the first row of the circulant matrix $M$ are again given by \eqref{M-entries}, and now
they take the form
\begin{align}\label{eq51}
	M_{1,j}  = \frac{1}{m} \sum_{\ell=1}^{m} \frac{\left(1-\theta(1-\theta)\nu^2\lambda_\ell^2\right)
  \cos((j-1)\xi_\ell) + \nu\lambda_\ell\sin((j-1)\xi_\ell)}{1+\theta^2\nu^2\lambda_\ell^2} \quad (1\le j \le m).
\end{align}
Similarly to the Sections~\ref{sectiondiscFourier} and \ref{highercentered}, we distinguish between even and
odd sizes of the discretization matrices $M$.
\begin{description}[style=unboxed,leftmargin=0cm]
\item [{Case 1:} $m$ is {even}.]
From \eqref{eq51} we have (also using \eqref{angle-identities}) that
\begin{align}\label{M1mSpectral}
	M_{1,m} = \frac{1}{m} \sum_{\ell=1}^{m} \frac{\left(1-\theta(1-\theta)\nu^2\lambda_\ell^2\right)
	\cos(\xi_\ell) - \nu\lambda_\ell\sin(\xi_\ell)}{1+\theta^2\nu^2\lambda_\ell^2}.
\end{align}
The following proposition proves that \eqref{M1mSpectral} is negative for sufficiently large CFL number $\nu$.
\begin{proposition}\label{m_even_nonpositivitySpectral}
	Consider the iterative formula \eqref{M} applied to the advection equation \eqref{advection} with periodic
	boundary condition.
	Let the matrix $M_{m \times m}$ result from a given even spactral collocation method in space (with $m$
	spatial grid points) and the $\theta$-method in time.
	Let also $\nu$ be the CFL number defined in \eqref{nudef}.
	If $m \ge 4$ is \emph{even}, then there exists $\nu_0  >0$ such that the matrix $M$ has at least one
	negative entry for any $\nu > \nu_0$.
\end{proposition}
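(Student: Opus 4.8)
The plan is to prove, exactly as in Proposition~\ref{m_even_nonpositivityFE}, that the single entry $M_{1,m}$ is negative for all large $\nu$. Starting from \eqref{M1mSpectral}, I would first subtract the vanishing quantity $\frac{1}{m}\sum_{\ell=1}^m\cos(\xi_\ell)=0$ and combine the fractions over the common denominator $1+\te^2\nu^2\lambda_\ell^2$. The pure $\cos(\xi_\ell)$ contributions cancel, leaving the spectral analogue of the reduced form in that earlier proof,
\[
M_{1,m}=-\frac{\nu}{m}\sum_{\ell=1}^{m}\frac{\lambda_\ell\sin(\xi_\ell)+\te\nu\lambda_\ell^2\cos(\xi_\ell)}{1+\te^2\nu^2\lambda_\ell^2},
\]
where the finite-difference symbol $\psi(\xi_\ell)$ is now replaced by the spectral symbol $\lambda_\ell$ of \eqref{lambdaSCdef}. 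Since $\lambda_\ell=\xi_\ell$ for the lower-half indices $1\le\ell\le m/2$, it is convenient to regard $\psi(x):=x$ as the relevant symbol on $(0,\pi)$, so that the machinery of Proposition~\ref{m_even_nonpositivityFE} can be transcribed.

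The second step is the reduction to a half-sum. Here the evenness/oddness of $\psi$ used before is supplied instead by the structure of \eqref{lambdaSCdef}: the eigenvalue vanishes at $\ell=1$ (where $\xi_1=0$) and at the Nyquist index $\ell=m/2+1$ (where $\lambda_\ell$ is set to $0$ for even $m$), so those two terms drop out; and for $2\le\ell\le m/2$ the conjugate index $m-\ell+2$ lies in the upper half with $\lambda_{m-\ell+2}=\xi_{m-\ell+2}-2\pi=-\xi_\ell=-\lambda_\ell$, together with $\sin(\xi_{m-\ell+2})=-\sin(\xi_\ell)$ and $\cos(\xi_{m-\ell+2})=\cos(\xi_\ell)$ (cf.~\eqref{angle-identities}). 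The two paired summands are therefore equal, and I obtain
\[
M_{1,m}=-\frac{2\nu}{m}\sum_{\ell=2}^{m/2}\frac{\lambda_\ell\bigl(\sin(\xi_\ell)+\te\nu\lambda_\ell\cos(\xi_\ell)\bigr)}{1+\te^2\nu^2\lambda_\ell^2},
\]
with $\lambda_\ell=\xi_\ell$ for these indices. The crucial property is that $\psi(x)=x>0$ on $(0,\pi)$---precisely the feature that Remark~\ref{remark15} isolates as the engine of the finite-difference argument.

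The final step reuses the $\xi_\ell\leftrightarrow\pi-\xi_\ell$ pairing verbatim. Because $\pi-\xi_\ell=\xi_{m/2-\ell+2}$ is again a lower-half grid point, one has $\psi(\pi-\xi_\ell)=\pi-\xi_\ell>0$, so the algebraic identity for $f(\xi_\ell;\te,\nu)+f(\pi-\xi_\ell;\te,\nu)$ established in the proof of Proposition~\ref{m_even_nonpositivityFE} applies unchanged (with the term $\xi_\ell=\pi/2$, occurring exactly when $4\mid m$, separated off as in \eqref{M1mnew}). In the grouped numerator the factor $\sin(\xi_\ell)+\te\nu\,\psi(\xi_\ell)\cos(\xi_\ell)$ is positive, while the remaining factor $\te\nu\,\psi(\pi-\xi_\ell)\bigl(\te\nu\,\psi(\xi_\ell)\sin(\xi_\ell)-\cos(\xi_\ell)\bigr)$ becomes positive as soon as $\te\nu\,\xi_\ell\sin(\xi_\ell)>\cos(\xi_\ell)$, i.e.~for $\nu$ larger than some $\nu_\ell$. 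Taking $\nu_0:=\max_\ell\nu_\ell$ makes every grouped summand strictly positive, whence $M_{1,m}<0$ for all $\te\in(0,1]$ and $\nu>\nu_0$, establishing the claim.

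I expect the only genuine subtlety---rather than a routine transcription of the earlier proof---to lie in the half-sum reduction: one must verify that the piecewise definition \eqref{lambdaSCdef}, including the artificially zeroed Nyquist eigenvalue at $\ell=m/2+1$, reproduces the oddness $\lambda_{m-\ell+2}=-\lambda_\ell$ that the finite-difference symbol enjoyed automatically. A related point worth flagging is that a naive per-term limit $\nu\to+\infty$ is inconclusive, since each summand behaves like $\cos(\xi_\ell)/(\te\nu)$ and $\sum_{\ell=2}^{m/2}\cos(\xi_\ell)=0$; it is exactly the pairing $\xi_\ell\leftrightarrow\pi-\xi_\ell$ that exposes the genuinely positive contribution at the next order in $1/\nu$ and hence pins down the sign of $M_{1,m}$.
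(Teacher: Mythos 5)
Your proof is correct and follows essentially the same route as the paper's: the same cancellation of the $\cos(\xi_\ell)$ terms, the same half-sum reduction via $\lambda_1=\lambda_{m/2+1}=0$ and the oddness $\lambda_{m+2-\ell}=-\lambda_\ell$, and the same identification $\psi(x):=x>0$ on $(0,\pi)$ so that the pairing argument of Proposition~\ref{m_even_nonpositivityFE} (as isolated in Remark~\ref{remark15}) applies verbatim. Your closing observation---that the $O(1/\nu)$ contributions cancel because $\sum_{\ell=2}^{m/2}\cos(\xi_\ell)=0$ and the sign is only pinned down at the next order by the $\xi_\ell\leftrightarrow\pi-\xi_\ell$ pairing---is a correct and worthwhile clarification of why the argument is structured this way, though the paper leaves it implicit.
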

\begin{proof}
	The proof is similar to that of Proposition~\ref{m_even_nonpositivityFE}: we show that
	$M_{1,m}<0$ for all $\theta \in (0,1]$ and $\nu > \nu_0$, where $\nu_0 >0$ depends on $m$ and $\theta$.
	
	First, observe that by using $\frac{1}{m}\sum_{\ell=1}^m \cos(\xi_\ell) = 0$ we can rewrite \eqref{M1mSpectral} as
	\[
	M_{1,m} = -\frac{\nu}{m} \sum_{\ell=1}^{m} \frac{\lambda_\ell\sin(\xi_\ell) +
		\theta\nu\lambda_\ell^2 \cos(\xi_\ell)}{1+\theta^2\nu^2\lambda_\ell^2}.
	\]
	Now, since $\lambda_1 = \lambda_{m/2+1} = 0$, we have
	\[
	M_{1,m} = -\frac{\nu}{m} \sum_{\ell=2}^{m/2} \left(\frac{\lambda_\ell\sin(\xi_\ell) +
		\theta\nu\lambda_\ell^2 \cos(\xi_\ell)}{1+\theta^2\nu^2\lambda_\ell^2}\right)-\frac{\nu}{m} \sum_{\ell=m/2+2}^{m} \left(\frac{\lambda_\ell\sin(\xi_\ell) +
		\theta\nu\lambda_\ell^2 \cos(\xi_\ell)}{1+\theta^2\nu^2\lambda_\ell^2}\right).
	\]
	One easily checks from \eqref{lambdaSCdef} and \eqref{xildef} that for any $\frac{m}{2}+2\le\ell\le m$ 
	\[
	\lambda _\ell=\xi _\ell-2 \pi =-\xi _{m+2-\ell}=-\lambda _{m+2-\ell}.
	\]
	This implies that 
	\begin{align*}
		\sum_{\ell=2}^{m/2} \frac{\lambda_\ell\sin(\xi_\ell) + \theta\nu\lambda_\ell^2 \cos(\xi_\ell)}
			{1+\theta^2\nu^2\lambda_\ell^2}
		= \sum_{\ell=m/2+2}^{m} \frac{\lambda_\ell\sin(\xi_\ell) + \theta\nu\lambda_\ell^2 \cos(\xi_\ell)}
			{1+\theta^2\nu^2\lambda_\ell^2},
	\end{align*}
	so 
	\[
		M_{1,m}  = -\frac{2\nu}{m} \sum_{\ell=2}^{m/2} \frac{\lambda_\ell\sin(\xi_\ell) +
			\theta\nu\lambda_\ell^2 \cos(\xi_\ell)}{1+\theta^2\nu^2\lambda_\ell^2}.
	\]
	Now notice (due to $\lambda_\ell=\xi_\ell$ for $2\le\ell\le \frac{m}{2}$) that we also have 
	\[
	M_{1,m}  = -\frac{2\nu}{m} \sum_{\ell=2}^{m/2} \frac{\xi_\ell\sin(\xi_\ell) +
			\theta\nu\xi_\ell^2 \cos(\xi_\ell)}{1+\theta^2\nu^2\xi_\ell^2},
	\]
	thus
	\[
	M_{1,m}  =-\frac{2\nu}{m} \sum_{\ell=2}^{m/2} \frac{\psi(\xi_\ell)(\sin(\xi_\ell)+\theta\nu\psi(\xi_\ell)
					\cos(\xi_\ell))}{1+\theta^2\nu^2\psi^2(\xi_\ell)}
	\]
	with $\psi(x):=x$. Since $\psi(x)>0$ for $x\in(0,\pi)$, according to Remark \ref{remark15}, the proof is complete.
\end{proof}

As previously, we conjecture that $M_{1,m}<0$ for \emph{all} values of $\nu>0$ and $\theta\in(0,1]$.
We have been able to verify this for $m\in\{4,6,8,10\}$, as follows.
The expression $M_{1,m}$ is a rational function in $\nu$ and $\theta$, whose
denominator is positive. By introducing a new variable $y:=\pi\theta\nu\ge 0$
and dividing by $\nu>0$, we can write the numerator as a univariate polynomial 
$p_m(y)$.  For example,
$$p_8(y) = -3 \left(8+3 \sqrt{2}\right) y^4+64 y^3-32 \left(7+5 \sqrt{2}\right) y^2+256 y-256 \left(2+\sqrt{2}\right).$$
We have confirmed with symbolic calculations that $p_m(y)<0$
for all $y\ge 0$ in the cases $m\in\{4,6,8,10\}$. 
\begin{remark} 
When generating the matrix $M$ for the symbolic calculations for larger values of $m$ for the actual full discretization, it is of course computationally more efficient to use the formula $\cF R(\nu \Lambda) \cF^*$ in \eqref{Mdualdef} instead of $R(\nu L)$ (because in the latter form one would need to evaluate the inverse of a non-sparse matrix).
\end{remark} 

\item [{Case 2:} $m$ is {odd}.]
This time we find a behavior similar to that observed in Section~\ref{sectioncentered}; see
Figure~\ref{fig_spectral}. \\
Moreover, since this time $\lambda_\ell\ne 0$ for $2\le\ell\le m$, the same asymptotic results hold as in the case of the
$2^\text{nd}$-order finite difference scheme in Section~\ref{sectioncentered}. Indeed, 
for odd $m = 2k+1 \ge 5$ we have that
\begin{align}\label{M1,j_m=odd_spectral}
	\begin{split}
		M_{1,1} & = \frac{1}{m} \left(1 + \sum_{\ell=2}^{m} \frac{1-\theta(1-\theta)\nu^2\lambda^2_\ell}
			{1+\theta^2\nu^2\lambda^2_\ell}\right) \\
		M_{1,j} & = \frac{1}{m} \left(1 + \sum_{\ell=2}^{m} \frac{(1-\theta(1-\theta)\nu^2\lambda^2_\ell)
			\cos((j-1)\xi_\ell) + \nu \lambda_\ell\sin((j-1)\xi_\ell)}{1+\theta^2\nu^2\lambda^2_\ell}
			\right) \quad (j\ge 2).
	\end{split}
\end{align}

Taking $\nu \to +\infty$ and $\te \in (0,1]$ fixed in \eqref{M1,j_m=odd_spectral}, yields
\begin{align*}
    M_{1,1}^\infty \coloneqq \lim_{\nu \to +\infty} M_{1,1} & =  1-\frac{m-1}{m\theta} \\
    M_{1,j}^\infty \coloneqq \lim_{\nu \to +\infty} M_{1,j} & = \frac{1}{m\theta} \quad (j \ge 2).
\end{align*}
As a result, 
we conclude that
\begin{align*}
	M_{1,j}^\infty > 0 \text{ for all } 2\le j\le m \text{ and } \te \in (0,1],
\end{align*}
while
\begin{align*}
	M_{1,1}^\infty > 0 \quad\Longleftrightarrow \quad \theta > \frac{m-1}{m}.
\end{align*}
\end{description}

\begin{figure}
\begin{center}
\includegraphics[width=0.48\textwidth]{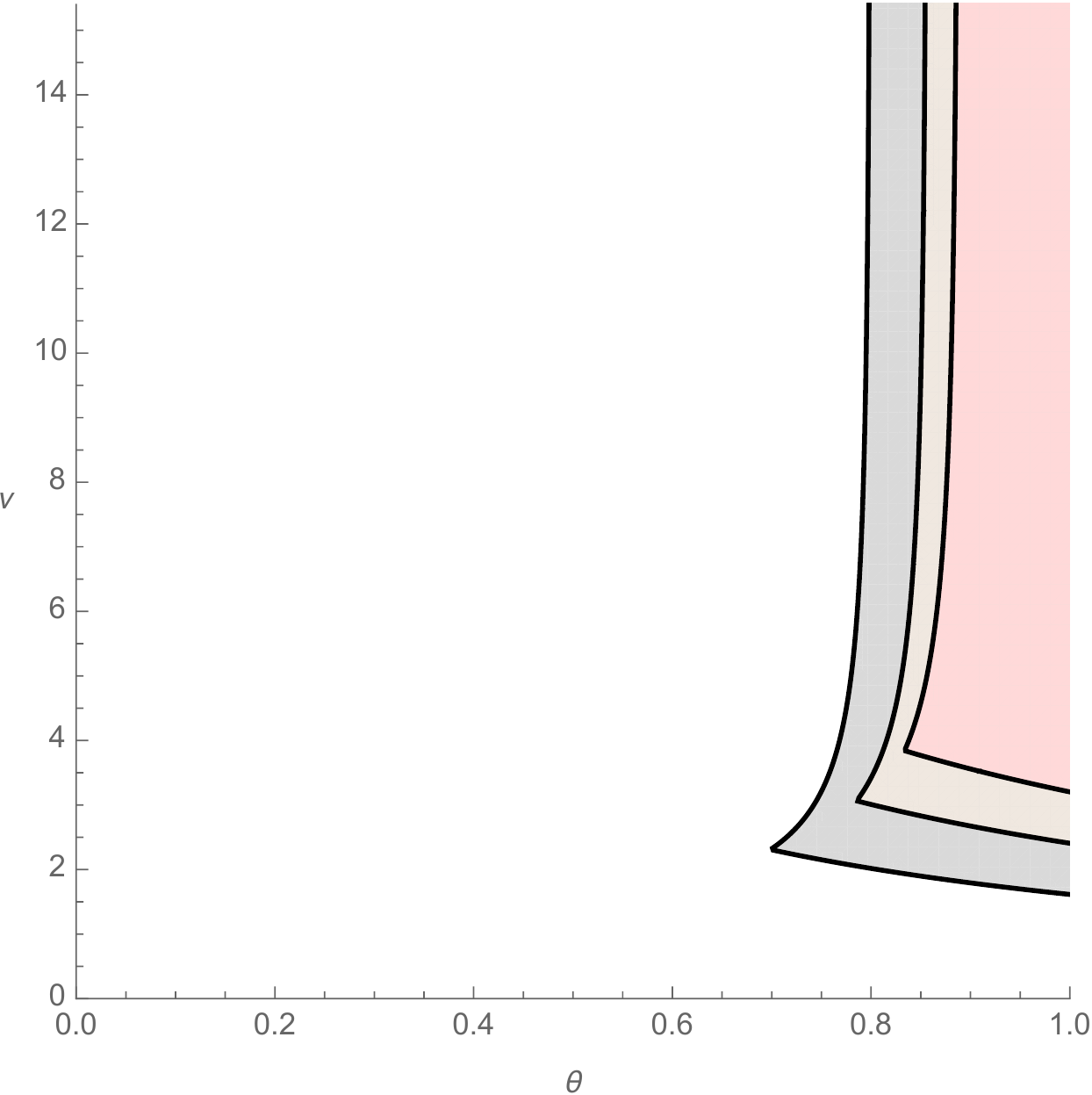}
\caption{Non-negativity of the matrix family generated by the Fourier spectral collocation method in space and the $\theta$-method in time; see Section~\ref{spectrcoll}. The parameter regions in the $(\te,\nu)$ parameter plane ensuring $M(m,\te,\nu)\ge 0$ are highlighted for $m\in\{5, 7, 9\}$ (different values of $m$ are represented by different colors; the regions ``shrink'' as $m$ gets larger).}\label{fig_spectral}
\end{center}
\end{figure}

\section{Conclusions}\label{conclusions}

In this work, we have studied the positivity preservation of certain fundamental
discretizations of the advection equation with periodic boundary condition \eqref{advection}. 
Our detailed investigations in Sections~\ref{sectiondiscFourier}--\ref{section3}
were devoted to the  full discretization obtained by coupling the second-order centered
differences in space with the $\theta$-method in time.  
 Rather than using SSP theory \cite{SSPbook}, 
we have employed a direct
approach, first based on discrete Fourier analysis and then
on a polynomial representation of the entries of the full discretization matrix.
The characterization of the 
matrix entries, 
along with the related trigonometric identities
presented in Corollary \ref{corollary1}, may be of independent interest.
In Section \ref{otherdisc}, we considered higher-order centered
differences or Fourier spectral collocation in space, and again the $\theta$-method in time. 

For all full discretizations constructed this way, we have found similar behavior.
If the number of spatial grid points $m$ is even, no method is positivity
preserving, while if $m$ is odd, some methods may be positivity
preserving.  Positivity is generally enhanced by taking larger values
of the CFL number $\nu>0$, larger values of the time-discretization parameter $\theta\in [0,1]$, or smaller
values of the spatial grid points $m\in\mathbb{N}^+$.  These tendencies, and more specific results, are described
in Theorem~\ref{thm2}, and can be seen
in Figures~\ref{fig_variousk}, \ref{fig_5stencilm579}, \ref{fig_7stencilm79},
and \ref{fig_spectral}.
Our positive results about the full discretizations are perhaps unexpected, since neither of the underlying spatial semi-discretizations preserves positivity.

Although some of the spatial discretizations considered above have high order, the $\theta$-method as time discretization typically has order only 1 (order 2 occurs only for $\theta=1/2$). Therefore, we emphasize that our goal in this work is not to provide efficient discretizations but rather
to understand the behavior of these simple building blocks, as a means
of gaining insight and understanding the positivity of more complicated
discretizations that may not be amenable to a thorough analysis.

There are several possible future directions for research building on this
work.  Other finite difference spatial discretizations could be studied
using similar techniques, and higher-order one-step time discretizations could
easily be incorporated via \eqref{Mdualdef}.  Similarly, finite difference
discretizations of other linear partial differential equations could be analyzed with the same techniques.
Further areas for extension might 
include other boundary conditions or
multidimensional problems.






\ifjournal
	\sloppy
	\printbibliography
\else
	\bibliographystyle{acm}
	\bibliography{references}
\fi

\end{document}